
\documentclass[12pt]{amsart}





\usepackage{graphicx}
\usepackage[capitalise]{cleveref}
\usepackage{amsmath,amssymb}
\usepackage{fullpage}
\usepackage{easylist}
\usepackage{color}
\usepackage{enumitem}
\setlist[enumerate]{leftmargin=.5in}
\setlist[itemize]{leftmargin=.5in}
\usepackage[colorinlistoftodos,bordercolor=orange,backgroundcolor=orange!20,linecolor=orange,textsize=scriptsize]{todonotes}

\def\T{\mathsf{T}}
\def\K{\mathsf{K}}
\def\Z{\mathbb{Z}}
\def\R{\mathbb{R}}
\def\M{\mathcal{M}}
\def\barT{\overline{\T}}
\def\barsigma{\overline{\sigma}}
\def\a{\boldsymbol{a}}
\def\b{\boldsymbol{b}}
\def\p{\boldsymbol{p}}
\def\x{\boldsymbol{x}}
\def\y{\boldsymbol{y}}

\def\alt{\operatorname{alt}}
\def\sd{\operatorname{sd}}
\def\S{\mathcal{S}}

\def\ind{\operatorname{ind}}
\def\Xind{\operatorname{Xind}}
\def\Hom{\operatorname{Hom}}
\def\sign{\operatorname{sign}}

\Crefname{thm}{Theorem}{Theorems}

\Crefname{cor}{Corollary}{Corollaries}




\newtheorem{thm}{Theorem}[section]
\newtheorem*{classic-thm}{Theorem}
\newtheorem{lemma}[thm]{Lemma}
\newtheorem{prop}[thm]{Proposition}
\newtheorem{cor}[thm]{Corollary}
\theoremstyle{definition}

\newtheorem{example}{Example}[section]
\newtheorem{remark}{Remark}[section]

\newtheoremstyle{named}{}{}{\itshape}{}{\bfseries}{.}{.5em}{\thmnote{#3}#1}
\theoremstyle{named}
\newtheorem*{namedtheorem}{}

\begin{document}

\title{Multilabeled versions of Sperner's and Fan's lemmas and applications}

\author{Fr\'ed\'eric Meunier and Francis Edward Su}

\address{Department of Mathematics, Harvey Mudd College \\ 301 Platt Blvd\\ Claremont CA 91711, USA}
\email{su@math.hmc.edu}

\address{Universit\'e Paris-Est, CERMICS (ENPC)\\ 6-8 avenue Blaise Pascal\\ 77455 Marne-la-Vall\'ee, France}

\email{frederic.meunier@enpc.fr}
\subjclass[2010]{Primary 55M20; Secondary 54H25, 05E45, 91B32}

\begin{abstract}
We propose a general technique related to the polytopal Sperner lemma for proving old and new multilabeled versions of Sperner's lemma.  A notable application of this technique yields a cake-cutting theorem where the number of players and the number of pieces can be independently chosen. We also prove multilabeled versions of Fan's lemma, a combinatorial analogue of the Borsuk-Ulam theorem, and exhibit applications to fair division and graph coloring. 
\end{abstract}

\keywords{Cake-cutting, consensus-halving, Fan's lemma, graph coloring, labelings, Sperner's lemma}

\maketitle

\section{Introduction}




Sperner's lemma and Fan's lemma are classical results of combinatorial topology.
Notably, they provide elementary constructive proofs of the topological theorems of Brouwer and Borsuk-Ulam and their applications.  The typical object involved with these lemmas is a triangulation of a pseudomanifold with an integer labeling of the vertices. Then, a constraint on the labeling imposes the existence of a simplex with a certain pattern.  
For Sperner's lemma, such a simplex is the discrete analogue of a Brouwer fixed point, and for Fan's lemma, such a simplex is the discrete analogue of the antipodal point whose existence is asserted by the Borsuk-Ulam theorem. Further information about extensions and various applications of these two lemmas is available in the recent survey by De Loera et al.~\cite{de2017discrete}.

These lemmas can be extended to results involving many labelings, in which one seeks the existence of a simplex on which the labelings together exhibit a special pattern.  The oldest results of this type are probably Bapat's ``quantitative'' generalization of Sperner's lemma~\cite{bapat1989constructive} (see \cref{sec:bapat}) and Lee-Shih's generalization of Fan's formula~\cite{lee1998counting}.  


In this paper, we prove multilabeled extensions of Sperner's and Fan's lemmas and discuss various applications. In particular, we show how these extensions can be used to get nice new results in social choice, game theory, graph theory, and topology.




One of the contributions of this paper is a new technique for proving multilabeled versions of Sperner's lemma.  Interestingly, it relies on another generalization of the Sperner lemma, namely the polytopal Sperner lemma~\cite{de2002polytopal}. 
We are not only able to recover several known results using this method, but also to find new ones that seem to be of interest. One of these new results is a theorem about envy-free cake-cutting. 
The original envy-free cake-cutting theorem, found independently by Stromquist~\cite{stromquist1980cut} and by Woodall~\cite{woodall1980dividing} in 1980, 
considers dividing a one-dimensional cake among $k$ players and
ensures under mild conditions on player preferences that there always exists a division of the cake into $k$ connected pieces and an envy-free assignment of these pieces to the $k$ players, i.e., an assignment such that each player prefers the piece she is assigned. 
Woodall actually proved that
such a division exists without knowing the preferences of a ``secretive player'': it is possible to divide the cake into $k$ connected pieces so that whatever choice is made by the ``secretive player'', there is an envy-free assignment of the remaining pieces to the $k-1$ other players: they do not envy each other nor the secretive player.
(This theorem has recently been rediscovered, with a much simpler proof by Asada et al.~\cite{asada2017fair} who were not aware of Woodall's result.)
A special case of our theorem is the following dual version, which is new in the cake-cutting literature.

\begin{cor}
\label{cor:cake}
For any instance of the cake-cutting problem with $k$ players, there exists a division of the cake into $k-1$ connected pieces so that no matter which player leaves, there is an envy-free assignment of the pieces to the remaining $k-1$ players.
\end{cor}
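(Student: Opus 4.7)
The plan is to parametrize divisions of the cake into $k-1$ connected pieces by points of the simplex $\Delta^{k-2}$, with barycentric coordinates giving the lengths of the pieces, and then to cast envy-freeness as a rainbow condition for $k$ labelings on a triangulation of $\Delta^{k-2}$. I would then extract the desired division via the multilabeled Sperner theorem developed earlier in the paper, followed by a standard compactness argument.

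First, take a sequence of triangulations $\T_n$ of $\Delta^{k-2}$ with mesh tending to $0$. For each vertex $v \in \T_n$ and each player $j \in \{1,\ldots,k\}$, define a label $\ell_j(v) \in \{1,\ldots,k-1\}$ equal to the index of a piece that player $j$ prefers at the division represented by $v$. Under the usual hungry-player and closedness assumptions, no player ever labels a piece of zero length, so on any proper face of $\Delta^{k-2}$ the labels avoid the ``missing'' pieces; this yields the boundary condition needed for a Sperner-type statement.

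Second, apply the paper's multilabeled extension of Sperner's lemma to the $k$ labelings $(\ell_1,\ldots,\ell_k)$ on $\T_n$. I expect the output to be a full-dimensional simplex $\sigma_n \subset \T_n$ with vertices $w_1^{(n)},\ldots,w_{k-1}^{(n)}$ such that for every $j \in \{1,\ldots,k\}$ there exists a bijection $\phi_j^{(n)} \colon \{1,\ldots,k\}\setminus\{j\} \to \{w_1^{(n)},\ldots,w_{k-1}^{(n)}\}$ satisfying $\bigl\{\ell_i\bigl(\phi_j^{(n)}(i)\bigr) : i \neq j\bigr\} = \{1,\ldots,k-1\}$. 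Thus for each choice of a player $j$ to be removed, the remaining $k-1$ players can be matched with the $k-1$ vertices of $\sigma_n$ so that their labels form a rainbow over all pieces.

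Third, by compactness of $\Delta^{k-2}$ and since mesh$(\T_n) \to 0$, extract a subsequence along which $\sigma_n$ shrinks to a single point $v^\star \in \Delta^{k-2}$ and along which the finitely many pieces of combinatorial data $(\phi_j^{(n)})$ stabilize. Using closedness of preferences, each player's preferred-piece relation is preserved in the limit, so at the single division $v^\star$ and for each player $j$, the remaining $k-1$ players each receive one of their preferred pieces under $\phi_j$, with distinct pieces going to distinct players. This is exactly an envy-free assignment of the $k-1$ pieces to whichever $k-1$ of the $k$ players remain, proving the corollary.

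The main obstacle is the second step: confirming that the multilabeled Sperner theorem proved via the polytopal technique really delivers simultaneously, for every omitted player $j$, a matching yielding a rainbow restriction. The boundary behavior of the labelings and the hungry-player assumption must mesh precisely with the hypotheses of that theorem. Once this is in place, the compactness/limit step is standard in the cake-cutting literature and should introduce no further difficulty.
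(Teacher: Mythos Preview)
Your overall plan---encode divisions into $k-1$ pieces as points of $\Delta^{k-2}$, take the $k$ Sperner labelings coming from preferences, extract a simplex with a suitable combinatorial property, then pass to the limit---is exactly the paper's route: the corollary is the case $q=k-1$ of \cref{thm:cake}\cref{item:leave}, and the compactness step is identical.

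The gap you flagged in step~2 is real, and it is not closed by invoking \cref{thm:multisperner}. With $m=k$ labelings and $n=k-1$ labels, the natural instance of \cref{thm:multisperner}\cref{item:manylabelings} (all $\ell_j=2$) only says that on some simplex each piece is preferred by at least two players; this does \emph{not} imply that after removing an arbitrary player the remaining players can be matched bijectively to pieces (a degree-$2$ bipartite graph on $(k-1)$ pieces and $k$ players can easily violate Hall's condition after a deletion). The paper does not go through \cref{thm:multisperner} here. Instead it applies the polytopal technique directly: take the uniform point $\p=(\a,\b)$ with $a_i=1/k$, $b_j=1/(k-1)$ in $\Delta^{k-1}\times\Delta^{k-2}$, find a minimal simplex $\barsigma$ with $\p\in\lambda(\barsigma)$, and form the bipartite graph $G(\barsigma)$ between players and pieces. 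The crucial extra information is the edge-weight identity~\eqref{eq:G}, which forces $|N(X')|\geq |X'|\cdot\frac{k}{k-1}$ for every set $X'$ of pieces; this is \cref{lem:bip_match}, and it is what guarantees (via Hall) a piece-to-player matching that avoids any single chosen player. One more small correction: the matching you need is between players and \emph{pieces} (edges of $G(\barsigma)$), not a bijection from players to the vertices of $\sigma$; several players may witness their preferred piece at the same vertex of $\sigma$, so your bijection-to-vertices formulation is stronger than required and need not hold.
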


This result may be stated in another way that appeals to those who are familiar with the reality TV show {\it Survivor}, in which the survival skills of a group of people are tested and one person gets ``voted off'' the island each evening.  In that case the theorem above can be restated: 
{\em You can divide a cake into $k-1$ pieces today so that no matter which player gets voted off the island tonight, there will be an envy-free assignment of the pieces to the remaining $k-1$ players.}

We show that Fan's lemma admits multilabeled generalizations too. This is another contribution of this paper. These generalizations extend known applications of Fan's lemma to graph coloring and ``continuous necklace-splitting'' problems. An illustration of these applications is the following multicoloring version of a theorem by Simonyi and Tardos~\cite[Theorem 1]{SiTa06}. The quantity $\ind(\Hom(K_2,G))$ used in the statement is a classical topological invariant associated to a graph $G$; see \cref{subsec:graph} for more details.

\begin{cor}\label{cor:color}
In any collection of $m$ proper colorings of a graph $G$, there is a vertex adjacent to at least $\left\lfloor\frac 1 {2m} \ind(\Hom(K_2,G))\right\rfloor+1$ distinct colors in each of the colorings.
\end{cor}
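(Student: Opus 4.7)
The plan is to adapt the Simonyi--Tardos proof of the $m=1$ case by combining it with the multilabeled Fan's lemma established earlier in this paper. Throughout I will abbreviate $d := \ind(\Hom(K_2,G))$.

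First, for each proper coloring $c_j\colon V(G)\to\Z$ ($j=1,\dots,m$), I would construct a $\Z_2$-equivariant Fan-type labeling $\lambda_j$ on an antipodally symmetric triangulation of $S^d$. By definition of $\ind$, there is a $\Z_2$-map $f\colon S^d\to\Hom(K_2,G)$; every vertex $w$ of a sufficiently fine triangulation of $S^d$ thus lies in the carrier of $f(w)$, a simplex of $\Hom(K_2,G)$ which records a complete bipartite subgraph $K_{A_1,A_2}\subseteq G$. Following Simonyi and Tardos, I would set $\lambda_j(w) := \varepsilon\cdot c_j(v)$, where $v$ attains the maximum of $c_j$ on $A_1\cup A_2$ and $\varepsilon\in\{+1,-1\}$ records the side of the bipartition containing $v$. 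Properness of $c_j$ prevents any edge of the triangulation from carrying antipodal labels, so each $\lambda_j$ satisfies the hypothesis of Fan's lemma.

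Next, I would feed these $m$ labelings into the multilabeled Fan's lemma of the paper. What I would aim for is a single simplex $\tau$ of the triangulation on which each $\lambda_j$ displays a fully alternating sequence of labels of length at least $t_j \geq \lfloor d/m\rfloor + 2$. Under $f$, the simplex $\tau$ maps into a simplex of $\Hom(K_2,G)$ encoding a complete bipartite subgraph $K_{A_1,A_2}\subseteq G$, and the alternating pattern of $\lambda_j$ splits the $t_j$ distinct colors of $c_j$ appearing on $\tau$ between the two sides of this bipartition, giving at least $\lceil t_j/2\rceil$ distinct colors on one side and $\lfloor t_j/2\rfloor$ on the other.

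Finally, any vertex of $A_1$ is adjacent to all of $A_2$ and therefore sees at least $\lfloor t_j/2\rfloor$ distinct colors in coloring $c_j$ for every $j$ simultaneously; the identity
\[
\left\lfloor\frac{\lfloor d/m\rfloor+2}{2}\right\rfloor \;=\; \left\lfloor\frac{d}{2m}\right\rfloor+1
\]
then delivers the bound in the statement. The main obstacle, as I see it, lies in matching the precise form of the multilabeled Fan's lemma proved earlier to the labeling conditions required by this graph-theoretic setup: verifying that the additive constant in the length $\lfloor d/m\rfloor + 2$ of the simultaneous alternating sequences is correctly obtained, and that $\Z_2$-equivariance passes cleanly through the intermediate map $f$. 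Once that is in place, the rest is bookkeeping.
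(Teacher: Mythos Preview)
Your overall strategy is the same as the paper's: define the Simonyi--Tardos style Fan labelings from the colorings, apply the multilabeled Fan lemma (\cref{thm:multifan}) with $d_i\simeq d/m$, and read off a complete bipartite subgraph in which each coloring shows many distinct colors on each side. A cosmetic difference is that you pull everything back to a triangulated $S^d$ via a $\Z_2$-map, whereas the paper simply applies \cref{thm:multifan} directly to $\Hom(K_2,G)$; since \cref{thm:multifan} is stated for an arbitrary free $\Z_2$-complex, the pullback is unnecessary.

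There is, however, a genuine off-by-one gap in your argument. \Cref{thm:multifan} yields, for each $i$, an alternating face of dimension $d_i$, that is, an alternating sequence of length $d_i+1$. With $d_i=\lfloor d/m\rfloor$ this gives $t_j=\lfloor d/m\rfloor+1$, not the $\lfloor d/m\rfloor+2$ you aim for; and with only $t_j=\lfloor d/m\rfloor+1$ your final count $\lfloor t_j/2\rfloor$ can fall to $\lfloor d/(2m)\rfloor$ rather than $\lfloor d/(2m)\rfloor+1$ (take $\lfloor d/m\rfloor$ even). The missing ingredient is precisely the ``extra vertex'' step in the paper's proof of \cref{thm:color}: from the alternating chain $(A_0,B_0)\preceq\cdots\preceq(A_{d_i},B_{d_i})$ one obtains $d_i+1$ vertices $v_0,\ldots,v_{d_i}$ with increasing colors, and then, because $A_0$ and $B_0$ are \emph{both} non-empty, one picks a further vertex $\bar v$ on the side opposite $v_0$ whose color is strictly smaller than $c_i(v_0)$. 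This produces a colorful $K_{\lceil d_i/2\rceil+1,\lfloor d_i/2\rfloor+1}$, and only then does any vertex on one side see at least $\lfloor d_i/2\rfloor+1=\lfloor d/(2m)\rfloor+1$ colors on the other, simultaneously for all $i$. So your identified ``main obstacle'' is real, but its resolution is not a sharper form of the Fan lemma; it is this extra combinatorial observation about $\Hom(K_2,G)$.
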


The multilabeled versions of Sperner's and Fan's lemmas, as well as their applications, are presented and proved in~\cref{sec:sperner,sec:fan}. There is next a brief discussion about a possible ``quantitative'' multilabeled version of Fan's lemma in the same vein as Bapat's theorem for Sperner's lemma (\cref{sec:bapat}). We end the paper with \cref{sec:open}, which gathers the open questions met along this work.

\section{Multilabeled versions of Sperner's lemma}
\label{sec:sperner}

\subsection{Results on multiple labelings and cake division}
Given a triangulation $\T$ of the standard $(n-1)$-dimensional simplex $\Delta^{n-1}=\langle v_1,\ldots,v_n \rangle$, a labeling of its vertices $V(\T)\rightarrow[n]$ is a {\em Sperner labeling} if each vertex $v$ of $\T$ is labeled by an integer $j$ such that $v_j$ is a vertex of the minimal face of $\Delta^{n-1}$ containing $v$.
Sperner's celebrated lemma is the following statement.

\begin{namedtheorem}[Sperner's lemma]
Any triangulation of $\Delta^{n-1}$ with a Sperner labeling has an $(n-1)$-dimensional simplex whose vertices get distinct labels.
\end{namedtheorem}


Our main results regarding multilabeled versions of Sperner's lemma are the following two theorems. 

\begin{thm}[Multilabeled Sperner lemma]
\label{thm:multisperner}
Let $\T$ be a triangulation of $\Delta^{n-1}$ and let $\lambda_1,\ldots, \lambda_m$ be Sperner labelings on $\T$.
\begin{enumerate}[label=\textup{(\arabic*)}]
\item\label{item:manylabels} 
For any choice of positive integers $k_1,\ldots,k_m$ such that $k_1+\cdots+k_m = m+n-1$, 
there exists a simplex $\sigma \in \T$ on which, for each $i$, the labeling $\lambda_i$ uses at least $k_i$ distinct labels.
\item\label{item:manylabelings} 
For any choice of positive integers $\ell_1,\ldots,\ell_n$ such that $\ell_1+\cdots+\ell_n = m+n-1$, 
there exists a simplex $\tau \in \T$ on which, for each $j$, the label $j$ is used in at least $\ell_j$ labelings.
\end{enumerate}
\end{thm}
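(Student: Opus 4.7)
The plan is to derive both parts from the polytopal Sperner lemma \cite{de2002polytopal} (PSL): for any $d$-polytope $Q$ with $N$ vertices and any Sperner labeling of a triangulation of $Q$, there are at least $N-d$ fully labeled simplices (with $d+1$ distinct labels). With $d=n-1$ and $N=m+n-1$, PSL will yield at least $m$ fully labeled simplices in a well-chosen auxiliary structure, each missing only $m-1$ of the $N$ labels, which is tight enough to force the per-cluster conditions by pigeonhole.

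For part (2), I would construct an auxiliary $(n-1)$-polytope $P\subset\R^{n-1}$ whose $N=m+n-1$ vertices split into clusters $B_1,\ldots,B_n$ of sizes $\ell_1,\ldots,\ell_n$, realized as small perturbations of the vertices $v_j$ of $\Delta^{n-1}$ (a ``blow-up'' of $\Delta^{n-1}$). I would produce a triangulation $T$ of $P$ extending $\T$ outside small neighborhoods of the $v_j$ and define a Sperner labeling $\mu\colon V(T)\to V(P)=\bigsqcup_j B_j$ so that on each $v\in V(\T)$ the value $\mu(v)\in B_{\lambda_1(v)}$, with the secondary index inside $B_{\lambda_1(v)}$ encoding the remaining data $\lambda_2(v),\ldots,\lambda_m(v)$. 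The rule is engineered so that $\mu$ is Sperner with respect to $P$ and so that a fully labeled simplex of $T$ corresponds to a simplex $\tau\in\T$ with $\#\{i:j\in\lambda_i(V(\tau))\}\geq\ell_j$ for every $j$. For part (1), I would mirror this scheme with an auxiliary $(n-1)$-polytope whose $N$ vertices cluster into $m$ groups $C_1,\ldots,C_m$ of sizes $k_1,\ldots,k_m$ indexed by the labelings, and a Sperner labeling $\mu'$ whose fully labeled simplices translate to simplices $\tau\in\T$ with $|\lambda_i(V(\tau))|\geq k_i$ for every $i$.

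The main obstacle is the design of the rule that assigns $\mu(v)$ (and $\mu'(v)$) from $(\lambda_1(v),\ldots,\lambda_m(v))$: it must simultaneously respect the Sperner condition on the blown-up polytope and arrange the $m-1$ missing labels in any fully labeled simplex so that the per-cluster inequalities come out automatically. Making the blown-up polytope and its triangulation compatible with $\T$ at the blow-up boundaries is the other technical point; once these are set up, PSL directly delivers the required simplex.
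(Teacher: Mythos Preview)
Your proposal outlines a strategy but leaves the crucial step entirely open: you yourself say that ``the main obstacle is the design of the rule'' for $\mu$, yet that rule \emph{is} the proof, and in fact no such rule can exist. The reason is dimensional. Your auxiliary polytope $P$ is still $(n-1)$-dimensional, so a fully labeled simplex carries only $n$ vertices and hence only $n$ values of $\mu$, whereas the conclusion you want concerns all $mn$ values $\lambda_i(v)$ on that simplex. A single label $\mu(v)$, chosen from a cluster $B_{\lambda_1(v)}$ of size $\ell_{\lambda_1(v)}$ (possibly $1$), simply cannot encode $\lambda_2(v),\ldots,\lambda_m(v)$. Concretely, take $n=3$, $m=2$, $(\ell_1,\ell_2,\ell_3)=(2,1,1)$ and clusters $B_1=\{b_{1,1},b_{1,2}\}$, $B_2=\{b_2\}$, $B_3=\{b_3\}$: whatever secondary index you use inside $B_1$, a fully labeled triangle with $\mu$-labels $\{b_{1,2},b_2,b_3\}$ tells you only that $\lambda_1$ hits $1,2,3$ and one bit about $\lambda_2$ at a single vertex; $\lambda_2$ need not use label $1$ anywhere on the triangle, so $\ell_1=2$ can fail. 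The ``pigeonhole'' you invoke---that a fully labeled simplex misses only $m-1$ of the $N$ labels---does not control how those $m-1$ missing labels are distributed among the clusters. For part~(1) there is an additional problem: your clusters $C_i$ are indexed by \emph{labelings}, which correspond to nothing geometric in $\Delta^{n-1}$, so there is no blow-up to perform. And already for $n=2$ the construction is impossible, since a $1$-polytope has exactly two vertices, not $m+1$.

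The paper resolves this bottleneck by raising the dimension rather than the vertex count in fixed dimension. It works on the product $P=\Delta^{m-1}\times\Delta^{n-1}$, an $(m+n-2)$-polytope, triangulated by some $\barT$ refining $\Delta^{m-1}\times\T$. The single labeling $\lambda(u_i,v)=(u_i,v_{\lambda_i(v)})$ is then a polytopal Sperner labeling, and a full-dimensional simplex $\barsigma\in\barT$ has $m+n-1$ vertices---exactly enough room to record all $m$ labelings over the underlying simplex of $\T$. Rather than just counting fully labeled simplices, the paper extends $\lambda$ piecewise-affinely to a self-map of $P$, uses a surjectivity (degree) lemma to hit a carefully chosen interior point $\p=(\a,\b)$, and reads off the conclusion from the bipartite graph $G(\barsigma)$ on $\{u_1,\ldots,u_m\}\cup\{v_1,\ldots,v_n\}$ via an elementary degree lemma. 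Parts~(1) and~(2) differ only in the choice of $\p$, which is precisely where the symmetry between labelings and labels becomes visible---something a one-sided blow-up of $\Delta^{n-1}$ cannot capture.
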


\begin{example}\label{ex:multisperner}
An illustration of the possible patterns asserted by the theorem is given by \cref{twospaces-sperner}. Let $n=3$ (with labels $1,2,3$) and $m=2$ (with labelings $\lambda_1, \lambda_2$). For $k_1+k_2 = 2 + 2 = 4$, \cref{thm:multisperner} \cref{item:manylabels} asserts the existence of a simplex $\sigma \in \T$ like this one, in which the first labeling uses $k_1=2$ labels ($1$ and $3$) and the second labeling uses $k_2=2$ labels ($2$ and $3$).  This simplex $\sigma$ also exhibits an instance asserted by \cref{thm:multisperner}, \cref{item:manylabelings} for $\ell_1+\ell_2 + \ell_3 = 1 + 1 + 2 = 4$, since the label $1$ appears in $1$ labeling, label $2$ appears in $1$ labeling, and label $3$ appears in $2$ labelings.
\end{example}

The proof of \cref{thm:multisperner} will actually show that \cref{item:manylabels} holds with an additional property: each label is used by at least one of the $\lambda_i$ on the simplex $\sigma$.
When $m=1$, both~\cref{item:manylabels,item:manylabelings} reduce to the usual Sperner's lemma.
Other choices of parameters (e.g., $\lambda_1=\cdots=\lambda_m$ with $\ell_1,\ldots,\ell_n\geq 1$) also yield the usual Sperner's lemma.


\begin{figure}[h]
\begin{center}
\includegraphics[height=1in]{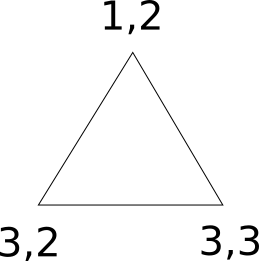}
\caption{\label{twospaces-sperner}Labeled simplex used in \cref{ex:multisperner} to illustrate \cref{thm:multisperner}}
\end{center}
\end{figure}

Originally conjectured by the first author, \cref{item:manylabels} has been proved by Babson~\cite{babson2012meunier} and an elementary and constructive proof has recently been found by Frick et al.~\cite{frick2017achieving}. The proof we propose is new but shares some common points with the second proof. In the next section, we will provide yet another proof of this result, with an approach that clearly departs from the other ones. A kind of ``dual'' statement of \cref{item:manylabels} is present in Section 6 of the cited paper by Frick et al.  However, \cref{item:manylabelings} seems to be the natural ``dual'' version, and we get it with almost the same proof as for \cref{item:manylabels}.



The second theorem is formulated as a ``cake-cutting'' result. Even if  we actually prove Sperner-type results implying it in a standard way, we felt that the cake-cutting formulation makes the statement of the theorem more appealing.

The traditional setting of the cake-cutting problem is the following.  We are given a cake to divide among players. Since the cake will be cut with parallel knives, we can identify the cake with the segment $[0,1]$ so that knife cuts are just points of this segment, and a division is just a partition of the cake into intervals (the fact that a boundary point belongs or not to a given interval does not matter).  The players have preferences satisfying the following two assumptions.  In any division of the cake, each player prefers at least one piece of positive length---\emph{the ``hungry'' assumption}---and may prefer several pieces. And, if a player prefers a particular piece in each division of a converging sequence of divisions, then she also prefers this piece for the limit division---\emph{the ``closed preferences'' assumption}.  An assignment of pieces to players is {\em envy-free} if each player prefers the piece she is assigned to all other pieces.

\begin{thm}
\label{thm:cake}
Consider an instance of the cake-cutting problem with $k$ players. The following holds for all integers $1\leq p,q\leq k$.
\begin{enumerate}[label=\textup{(\arabic*)}]
\item\label{item:secretive} For any subset of $p$ players, there is a division of the cake into $k$ pieces such that, 
no matter which $\lceil \frac{k-p}p\rceil$ pieces are 
removed,
there is an envy-free assignment of $p$ of the remaining pieces to the $p$ players
(the $p$ players do not envy each other nor the pieces that were removed).
\item\label{item:leave} There is a division of the cake into $q$ pieces such that, no matter which $\lceil \frac{k-q}q\rceil$ players leave, there is an envy-free assignment of the pieces to some $q$ of the remaining players.
\end{enumerate}
\end{thm}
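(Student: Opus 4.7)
The plan is to translate \cref{thm:multisperner} into the cake-cutting setting by way of the standard encoding of cake divisions as points of a simplex. For part \cref{item:secretive}, I would consider a sequence of triangulations $\T_n$ of $\Delta^{k-1}$ with mesh tending to $0$, identifying each vertex $v$ with the division of $[0,1]$ whose $k$ piece lengths are the barycentric coordinates of $v$. For each of the $p$ designated players $i$, define a labeling $\lambda_i^{(n)} \colon V(\T_n) \to [k]$ by letting $\lambda_i^{(n)}(v)$ be a favorite piece of player $i$ at the division encoded by $v$. The hungry assumption makes each $\lambda_i^{(n)}$ a Sperner labeling, since if $v$ lies on the minimal face $\langle v_{j_1},\ldots,v_{j_s}\rangle$ of $\Delta^{k-1}$, only the pieces $j_1,\ldots,j_s$ have positive length, so any favorite piece of player $i$ must be among them.

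Next, I would apply \cref{thm:multisperner}~\cref{item:manylabels} with a balanced choice of positive integers $k_1,\ldots,k_p$ satisfying $\sum_i k_i = p+k-1$, obtaining a simplex $\sigma_n \in \T_n$ on which $\lambda_i^{(n)}$ uses at least $k_i$ distinct labels; by the additional property stated right after the theorem, every piece label $j \in [k]$ also appears on $\sigma_n$. Extracting a convergent subsequence so $\sigma_n$ collapses to a point $\x^* \in \Delta^{k-1}$, the closed-preferences assumption implies that at the division $\x^*$ the preference set $P_i$ of player $i$ contains at least $k_i$ pieces and $\bigcup_i P_i = [k]$. Part \cref{item:leave} is handled symmetrically by triangulating $\Delta^{q-1}$, defining one Sperner labeling per player (yielding $k$ labelings), and applying \cref{thm:multisperner}~\cref{item:manylabelings} with $\ell_1,\ldots,\ell_q$ summing to $k+q-1$, obtaining in the limit a division into $q$ pieces in which each piece $j$ is preferred by at least $\ell_j$ players while each player prefers at least one piece.

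The final step is combinatorial: from the bounds just derived on the preference sets (resp.\ on the multiplicities with which pieces are preferred), I would deduce via Hall's theorem that for any removal of $\lceil (k-p)/p\rceil$ pieces in part \cref{item:secretive} (resp.\ $\lceil (k-q)/q\rceil$ players in part \cref{item:leave}), the reduced preference bipartite graph still admits a system of distinct representatives, which is precisely the desired envy-free assignment. I expect this Hall-type matching argument---where the $k_i$'s (resp.\ $\ell_j$'s) must be tuned so that the budget $p+k-1$ (resp.\ $k+q-1$) dispensed by \cref{thm:multisperner}, combined with the coverage condition, overcomes the Hall deficiency induced by the worst-case removal---to be the main obstacle, since neither the size bounds nor the coverage condition is sufficient by itself.
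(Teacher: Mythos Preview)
Your plan has a genuine gap in the final combinatorial step, and the obstacle you flagged is in fact insurmountable with the information that \cref{thm:multisperner} provides as a black box.

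Concretely, take part~\cref{item:secretive} with $p=3$ and $k=7$, so $r=\lceil(k-p)/p\rceil=2$ and the budget is $p+k-1=9$. To survive the removal of $r=2$ pieces even for singletons $S=\{i\}$ you need $|P_i|\geq r+1=3$ for every $i$, forcing $k_1=k_2=k_3=3$. Now the data
\[
P_1=P_2=\{1,2,3\},\qquad P_3=\{4,5,6,7\}
\]
satisfy $|P_i|\geq k_i$ and $\bigcup_i P_i=[7]$, yet after removing pieces $1$ and $2$ the set $S=\{1,2\}$ has neighbourhood $\{3\}$ and Hall fails. An analogous example (swap the roles of players and pieces) defeats part~\cref{item:leave}. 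So no tuning of the $k_i$'s (or $\ell_j$'s) can rescue the argument: the degree lower bounds plus coverage delivered by \cref{thm:multisperner} are strictly weaker than what is needed.

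The paper does \emph{not} derive \cref{thm:cake} from \cref{thm:multisperner}. Instead it goes back to the polytopal construction underlying both results: it picks the point $\p$ in $\Delta^{m-1}\times\Delta^{n-1}$ with \emph{uniform} barycentric coordinates $a_i=1/m$, $b_j=1/n$, so that the bipartite graph $G(\barsigma)$ carries nonnegative edge weights with $\sum_{e\in\delta(u_i)}\omega_e=1/m$ and $\sum_{e\in\delta(v_j)}\omega_e=1/n$. This fractional doubly-stochastic structure yields, for every subset $X'$ of one side, the expansion inequality $|N(X')|\geq |X'|\cdot|Y|/|X|$, which is exactly what Hall's theorem needs after the adversarial removal (\cref{lem:bip_match}). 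The point is that the weighted information is lost once you pass to the purely combinatorial conclusion of \cref{thm:multisperner}; you need to work one level lower, with the weights themselves.
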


When $p=q=k$, we get the usual envy-free result due to Stromquist~\cite{stromquist1980cut} and Woodall~\cite{woodall1980dividing}. When $p=k-1$, we get the ``secretive player'' extension of Woodall \cite{woodall1980dividing} and Asada et al.~\cite{asada2017fair} mentioned in the introduction. When $p=1$, we get that it is always possible to divide a cake into an arbitrary number of pieces a player is indifferent between. The case $q=1$ says something obvious. And \cref{cor:cake} is the special case $q=k-1$, which appears to be a new result in the cake-cutting literature.

One way to think about this theorem is that it asserts: (1) that for given number of players, divisions of cake are possible in which there are enough ties for most-preferred piece so that even if some pieces are removed, you can still get an envy-free assignment among the players, and (2) that for a given number of pieces, divisions of cake are possible in which there are enough pieces chosen by multiple players so that even if some players leave, the pieces can still be assigned in an envy-free way.

Two reviewers asked whether the bounds $\lceil \frac{k-p}p\rceil$ and $\lceil \frac{k-q}q\rceil$ of~\cref{thm:cake} could be improved. For \cref{item:secretive}, the question is whether there are $k$ and $p$ such that the conclusion holds for any instance with a number of removed pieces greater than $\lceil \frac{k-p}p\rceil$.  The question is similar for \cref{item:leave}. We conjecture that the answer is `no', and thus that somehow our theorem is optimal. It is `no'
for $p=q=2$ and $k=4$, as we explain now. Some other cases could probably be settled following a similar method, but a general answer remains to be provided.

Consider first the case of~\cref{item:secretive}.  Suppose player $1$ values the whole cake interval uniformly and player $2$ only values the first half of the cake $[0,1/2]$ uniformly (she does not value any portion of the second half).  Suppose for a contradiction that there is a division of the cake into $4$ pieces $A, B, C, D$ in that order such that, no matter which two pieces are removed, there is an envy-free assignment of the two remaining pieces to the two players.  This must mean that each of the two players is indifferent between at least three preferred pieces in that division (because otherwise they might envy the two removed pieces).  Because of player $1$'s preferences, the largest $3$ of the $4$ pieces must have the same length $1/4+ c$ for some $0 \leq c \leq 1/12$ and the remaining piece has length $1/4-3c$.  This implies that one of the largest $3$ pieces must be completely contained in the first half.  But then player $2$ must value that piece strictly more than both $C$ or $D$ since those pieces contain some of the second half of the cake.  This contradicts the fact that player $2$ had a 3-way tie for her preferred piece.  So the desired division is not possible.


Consider now the case of~\cref{item:leave}. This case is even simpler.  Choose the preferences such that every player has an \emph{indifference point} $x$ in $[0,1]$: a point for which the player is indifferent between choosing $[0,x]$ and $[x,1]$, and such that 
if the cake is cut at any point $y\neq x$, then that player must prefer the piece that contains $x$.  Consider 4 players with distinct indifference points: $x_{1} < x_{2} < x_{3} < x_{4}$.  For such players there is no hope of a division of the cake into two pieces so that no matter which two players leave, there is an envy-free assignment of the pieces to the remaining two players.  Because any point $y$ has a side which contains at most 2 indifference points, if those players leave, the other two players cannot be satisfied.


\subsection{Other applications}

In this section, we present applications of our theorems, or rather, applications of the proof techniques.

The first application concerns rental harmony. Roommates rent together an apartment and they have to decide who should get which room and for which part of the total rent. A division of the rent between the rooms is {\em envy-free} if it is possible to assign a different room to each roommate such that no roommate wishes to exchange her room with another roommate. The following two assumptions are customary, and are the counterparts of the two assumptions made for cake-cutting. A roommate will not envy the others if she gets any room for free. If a roommate prefers a room for every division of a converging sequence of divisions, then she also prefers that room for the limit division. Under these assumptions, there exists an envy-free rent division~\cite{su1999rental}. It is actually quite straightforward to see that a rental harmony version of \cref{thm:cake} holds. We do not state the version we could get in its full generality and we consider only the most interesting cases $p=k-1$ and $q=k-1$. The first case is actually that of the secretive roommates version considered in the paper by Frick et al. \cite{frick2017achieving}. The second case is the following {\em Survivor} rental harmony theorem, which is a new result in this area.

\begin{thm}\label{thm:rental-survivor}
Consider an instance of the rental-harmony problem with $k$ roommates and $k-1$ rooms. Then there exists a division of the rent so that no matter which roommate is kicked out, there is an envy-free assignment of the rooms to the remaining $k-1$ roommates.
\end{thm}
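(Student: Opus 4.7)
The plan is to mirror the proof of \cref{thm:cake}, \cref{item:leave} in the case $q=k-1$, replacing primal Sperner labelings under the hungry assumption by \emph{dual} Sperner labelings under the miserly assumption. First I would parameterize rent divisions by the simplex $\Delta^{k-2}=\langle v_1,\ldots,v_{k-1}\rangle$, one vertex per room, so that the barycentric coordinate $x_j$ of a point represents the share of the total rent paid for room $j$; at a vertex $v_j$ all the rent sits on room $j$, and more generally on a face $\langle v_{j_1},\ldots,v_{j_r}\rangle$ the free rooms are those indexed by $[k-1]\setminus\{j_1,\ldots,j_r\}$. By the miserly assumption, the labeling $\lambda_i$ sending each vertex $v$ of a triangulation to (an index of) a most preferred room of roommate $i\in[k]$ at the division $v$ is then a dual Sperner labeling.

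Next I would fix a sequence of triangulations $\T_N$ of $\Delta^{k-2}$ whose meshes tend to zero, equipped with the $k$ dual Sperner labelings $\lambda_1^N,\ldots,\lambda_k^N$, and invoke the dual form of the Sperner-type result that underlies the proof of \cref{thm:cake}, \cref{item:leave} with $q=k-1$ to produce, for each $N$, a simplex $\tau_N\in\T_N$ carrying the following \emph{Survivor pattern}: for every $i^\star\in[k]$ there is a bijection $\pi_{i^\star}$ between the $k-1$ vertices of $\tau_N$ and the roommates of $[k]\setminus\{i^\star\}$ such that the labels $\{\lambda_{\pi_{i^\star}(v)}^N(v):v\in V(\tau_N)\}$ exhaust $[k-1]$. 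By compactness, along a subsequence the $\tau_N$ collapse to a single rent division $x^\star\in\Delta^{k-2}$, and the closed-preferences assumption implies that any label appearing as $\lambda_i^N(v)$ for $v\in V(\tau_N)$ with $v\to x^\star$ remains a preferred room of roommate $i$ at $x^\star$. Therefore the Survivor pattern descends to $x^\star$, and for each $i^\star$ the bijection $\pi_{i^\star}$ yields an envy-free assignment of the $k-1$ rooms to the roommates in $[k]\setminus\{i^\star\}$ at the single division $x^\star$.

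The hard part will be the Sperner step, i.e., producing a $\tau_N$ that carries the \emph{full} Survivor pattern rather than just a weaker multilabel conclusion. A direct symmetric application of \cref{thm:multisperner}, \cref{item:manylabelings} with $\ell_1=\cdots=\ell_{k-1}=2$ (the unique symmetric choice consistent with $\sum_j\ell_j=2k-2=m+n-1$) only guarantees that each room is preferred by at least two roommates on $\tau_N$, and this alone is not sufficient: for instance, if two roommates prefer only room $1$ and two roommates have preferred set $\{2,3\}$, then the $\ell_j=2$ condition is satisfied but Hall's condition fails as soon as one of the two latter roommates is kicked out. The refinement I need is the intermediate Sperner-type statement driving the proof of \cref{thm:cake}, \cref{item:leave}, and it transfers verbatim to the dual (miserly) setting; once it is in hand the compactness/closed-preferences argument above completes the proof.
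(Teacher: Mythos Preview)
Your overall architecture is right and matches the paper's: follow the proof of \cref{thm:cake}\cref{item:leave} with $q=k-1$, then take a limit using closed preferences. The gap is in the one sentence you slide past: ``it transfers verbatim to the dual (miserly) setting.'' It does not. The whole machinery of \cref{subsec:prel} rests on \cref{lem:sperner-labeling}, which says that the induced map $\lambda\colon V(\barT)\to V(P)$ sends every vertex into the minimal face of $P$ containing it; this is exactly what lets you invoke \cref{lem:surj} and hit the chosen point $\p$. That lemma uses, in an essential way, that each $\lambda_i$ is a \emph{primal} Sperner labeling (the label lies in the carrier face). Under the miserly assumption you correctly get a \emph{dual} Sperner labeling: at $v_j$ room $j$ is the expensive one and the free rooms lie on the opposite face, so $\lambda_i(v_j)\neq j$. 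Then $\lambda(u_i,v)=(u_i,v_{\lambda_i(v)})$ is \emph{not} in the minimal face of $P$ containing $(u_i,v)$, \cref{lem:sperner-labeling} fails, and \cref{lem:surj} no longer applies. So nothing in \cref{subsec:prel} gives you the simplex $\barsigma$ and the bipartite graph $G(\barsigma)$ with the weights you need for \cref{lem:bip_match}.

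This is precisely why the paper does not run the argument directly on dual labelings but first applies the relabeling technique of Frick et al.\ (Section~4 of \cite{frick2017achieving}): that trick converts the miserly/dual situation into one governed by genuine (primal) Sperner labelings, after which the proof of \cref{thm:cake}\cref{item:leave} goes through unchanged. Your write-up needs that conversion step (or an independent proof of a dual polytopal Sperner lemma with the same matching consequence); without it the ``Survivor pattern'' simplex $\tau_N$ is simply not produced. Once you insert the relabeling, your compactness/closed-preferences endgame is fine.
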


Another application has a microeconomic flavor. We are given $m$ workers and $n$ factories belonging to a same company. The company can choose for each factory the wage of a worker (a unique wage for each factory). The choice of the wages is identified with a vector 
$\x\in \R_{\geq 0}^n$. Each worker $i$ has a continuous utility function 
$u_i\colon[n]\times \R_{\geq 0}^n\rightarrow \R_{\geq 0} $ 
that provides the utility for him to work at factory $j$ with a wage $x_j$ when the other wages are given by the $x_{j'}$ for $j'\neq j$. We suppose that for all $i$ and $j$, we have $u_i(j,\x)=0$ if $ x_j=0$. (The workers do not want to work for free.) The company has a target number of workers assigned to each factory $j$, which is a nonnegative integer denoted by $k_j$ and a total budget $B > 0$ for the wages. We have thus $\sum_{j=1}^nk_j=m$ and $\sum_{j=1}^nk_jx_j=B$. The company would like to choose the wages and to assign the workers to the factories so that no worker will strictly prefer to be assigned to another factory (``make the workers happy'').

\begin{thm}\label{thm:workers}
There exists a choice of wages such that the company is able to assign exactly $k_j$ workers to each factory $j$, while making all of the workers happy.
\end{thm}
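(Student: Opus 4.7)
The space of admissible wage vectors
$S=\{\x\in\R^n_{\geq 0}:\sum_{j=1}^n k_j x_j=B\}$
is an $(n-1)$-simplex with vertices $w_j=(B/k_j)e_j$, which I identify with $\Delta^{n-1}$ via $w_j\leftrightarrow v_j$. Fix a sequence $(\T_r)$ of triangulations of $S$ whose mesh tends to $0$. For each worker $i\in[m]$ and vertex $v\in V(\T_r)$, pick $\lambda_i^{(r)}(v)$ to be a factory $j$ with $x_j(v)>0$ that maximizes $u_i(j,\x(v))$. Because $u_i(j,\x)=0$ whenever $x_j=0$, such a maximizer exists and the chosen label lies in the minimal face of $S$ containing $v$, so each $\lambda_i^{(r)}$ is a Sperner labeling on $\T_r$.

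The combinatorial heart of the proof is to produce, for every $r$, a simplex $\sigma_r\in\T_r$ together with an assignment $\psi_r\colon[m]\to[n]$ such that $|\psi_r^{-1}(j)|=k_j$ for every $j$ and, for every worker $i$, some $v\in V(\sigma_r)$ satisfies $\lambda_i^{(r)}(v)=\psi_r(i)$. I would establish this by adapting the polytopal Sperner technique introduced to prove \cref{thm:multisperner}: one works on an auxiliary polytopal complex whose vertex set replicates factory $j$ exactly $k_j$ times, lifts the $m$ labelings $\lambda_i^{(r)}$ into a single Sperner-style labeling on this complex, and reads off the required $b$-matching from a fully-labeled simplex via Hall's theorem applied to the bipartite preference graph $\{(i,j):j\in T_i(\sigma_r)\}$, where $T_i(\sigma_r)$ collects the labels used by worker $i$ on $\sigma_r$. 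The point of the slot replication is to bake the capacity constraints $k_j$ directly into the topology, which plain applications of \cref{thm:multisperner} do not do.

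The argument finishes by compactness. There are only finitely many combinatorial types of $(\sigma_r,\psi_r)$, so on a subsequence $\psi_r\equiv\psi$ is constant and the vertices of $\sigma_r$ converge to a common point $\x^{\star}\in S$. For each worker $i$, a choice of $v_i^{(r)}\in V(\sigma_r)$ with $\lambda_i^{(r)}(v_i^{(r)})=\psi(i)$ gives $u_i(\psi(i),\x(v_i^{(r)}))\geq u_i(j,\x(v_i^{(r)}))$ for every $j$; continuity of $u_i$ then yields $u_i(\psi(i),\x^{\star})\geq u_i(j,\x^{\star})$ for every $j$, so no worker envies any factory at the wage vector $\x^{\star}$. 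Since $|\psi^{-1}(j)|=k_j$, the assignment $i\mapsto\psi(i)$ together with the wages $\x^{\star}$ is the required solution. The main obstacle is the combinatorial step: the Hall-type capacity condition is not directly delivered by parts \cref{item:manylabels} or \cref{item:manylabelings} of \cref{thm:multisperner}, so properly setting up the auxiliary slot polytope---so that its Sperner labelings faithfully reflect worker preferences on the original wage simplex---is the delicate point.
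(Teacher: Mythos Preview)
Your overall architecture---parametrize the admissible wages by $\Delta^{n-1}$, define Sperner labelings from the workers' best responses, extract a simplex carrying a feasible assignment, then pass to a limit by compactness and continuity---is exactly the paper's. The gap is precisely where you flag it: the combinatorial step.

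The slot-replication idea is vague as stated and, more importantly, unnecessary. The paper does \emph{not} build a new polytope with $k_j$ copies of factory $j$. It stays in the same framework of \cref{subsec:prel}, with $P=\Delta^{m-1}\times\Delta^{n-1}$ and the single labeling $\lambda(u_i,v)=(u_i,v_{\lambda_i(v)})$, and simply moves the target point: instead of taking the barycenter, it chooses $\p=(\a,\b)$ with $a_i=\frac 1 m$ and $b_j=\frac{k_j}{m}$. Since $\sum_j k_j=m$, this $\p$ lies in $P$. The surjectivity of the extended $\lambda$ (\cref{lem:surj}) yields a simplex $\barsigma$ hitting $\p$; the bipartite graph $G(\barsigma)$ then carries nonnegative edge weights with $\sum_{e\in\delta(u_i)}\omega_e=\frac 1 m$ and $\sum_{e\in\delta(v_j)}\omega_e=\frac{k_j}{m}$. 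A direct application of (the obvious weighted form of) Hall's theorem gives an edge set covering each worker once and each factory $v_j$ exactly $k_j$ times. So the capacity constraints are encoded not by altering the combinatorics of the polytope but by the \emph{choice of the point} $\p$; this is the idea you were missing.

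Your proposed route could perhaps be made to work---mapping a slot simplex $\Delta^{m-1}$ onto $\Delta^{n-1}$ and pulling back the triangulation---but you would have to confront the degeneracy that all $k_j$ slots of factory $j$ carry the same wage and hence identical utilities, and explain how the Sperner condition survives on the larger simplex. The paper's trick of weighting the target point avoids all of that.
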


%

\subsection{Proofs}

\subsubsection{Preliminaries}\label{subsec:prel}

All multilabeled versions of Sperner's lemma considered in this paper involve a triangulation $\T$ of the standard $(n-1)$-dimensional simplex $\Delta^{n-1}=\langle v_1,\ldots,v_n\rangle$ with $m$ Sperner labelings. Our proofs are based on the following construction. Consider the standard $(m-1)$-dimensional simplex $\Delta^{m-1}=\langle u_1,\ldots,u_m \rangle$ and the polytope  
$$P=\Delta^{m-1}\times\Delta^{n-1}$$
whose vertices are $\{(u_i,v_j)\colon i\in[m], j\in[n]\}$. 
Choose any triangulation $\barT$ of $P$ that refines the product decomposition 
$\Delta^{m-1}\times\T$
without adding new vertices;
this will ensure that any simplex $\barsigma$ in $\barT$ projects naturally to a simplex $\sigma$ in $\T$. (Note that the ``bar'' on $\sigma$ is not an operation: several simplices of $\barT$ projects to a same simplex in $\T$. It is only used for the following consistency: simplices in $\barT$ have a bar, simplices in $\T$ do not.)

We are given $m$ Sperner labelings $\lambda_1,\ldots,\lambda_m$ on $\T$, as in \cref{thm:multisperner}. For the proof of \cref{thm:cake}, these labelings are given by the players' preferences.
Then define the map $\lambda\colon V(\barT)\rightarrow V(P)$ by 
$$\lambda(u_i,v)=(u_i,v_{\lambda_i(v)}).$$

\begin{lemma}\label{lem:sperner-labeling}
The map $\lambda$ maps each vertex of $\barT$ to a vertex of the minimal face of $P$ containing it.
\end{lemma}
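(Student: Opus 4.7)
The plan is short because the statement essentially unpacks the definition of a Sperner labeling through the product structure of $P$. I would proceed in three bookkeeping steps.

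First, I would describe the vertex set of $\barT$. Because $\barT$ refines $\Delta^{m-1}\times\T$ without introducing new vertices, every vertex of $\barT$ is a vertex of some product cell $\Delta^{m-1}\times\sigma$ with $\sigma\in\T$, hence has the form $(u_i,v)$ with $i\in[m]$ and $v\in V(\T)$. So $\lambda$ is well-defined on all of $V(\barT)$ by the given formula.

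Next, I would identify the minimal face of $P$ containing an arbitrary vertex $(u_i,v)\in V(\barT)$. Since the faces of $P=\Delta^{m-1}\times\Delta^{n-1}$ are exactly products $F'\times F$ with $F'$ a face of $\Delta^{m-1}$ and $F$ a face of $\Delta^{n-1}$, and since minimality can be checked factor by factor, the minimal face of $P$ containing $(u_i,v)$ is $\{u_i\}\times F$, where $F$ is the minimal face of $\Delta^{n-1}$ containing $v$.

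Finally, I would check that $\lambda(u_i,v)=(u_i,v_{\lambda_i(v)})$ lies in $\{u_i\}\times F$ and is a vertex of it. The first coordinate $u_i$ is a vertex of $\{u_i\}$. For the second coordinate, the hypothesis that $\lambda_i$ is a Sperner labeling says precisely that $v_{\lambda_i(v)}$ is a vertex of the minimal face of $\Delta^{n-1}$ containing $v$, i.e.\ a vertex of $F$. Hence $\lambda(u_i,v)$ is a vertex of $\{u_i\}\times F$, as required.

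There is no real obstacle here; the only subtlety is remembering that faces of a product polytope are products of faces and that minimality passes to each factor, so that the Sperner condition on each individual labeling $\lambda_i$ translates directly into the ``minimal face'' condition in $P$.
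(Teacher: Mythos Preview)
Your proof is correct and follows essentially the same argument as the paper: identify the minimal face of $P$ containing $(u_i,v)$ as $\{u_i\}\times F$ with $F$ the minimal face of $\Delta^{n-1}$ containing $v$, then invoke the Sperner condition on $\lambda_i$ to place $v_{\lambda_i(v)}$ among the vertices of $F$. Your added remarks on the vertex set of $\barT$ and on faces of a product polytope are slightly more explicit than the paper's version but do not change the approach.
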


\begin{proof}
Consider a vertex $(u_i,v)$ of $\barT$, and let $\sigma\times\tau$ be the minimal face of $P$ containing it. The simplex $\sigma$ must be the single vertex $u_i$ itself and $v$ must be contained in the interior of $\tau$. 
Since $\lambda_i$ is a Sperner labeling, $\lambda_i(v)$ must be an integer $j$ such that $v_j$ is a vertex of $\tau$. Thus $\lambda(u_i,v)$ is $(u_i,v_j)$, which is a vertex of $\sigma\times\tau$, as desired.
\end{proof}

The lemma above actually says that $\lambda$ is a kind of `polytopal' Sperner labeling on $\barT$, using vertices of $P$ as labels.  There is a polytopal Sperner lemma \cite{de2002polytopal}, which asserts the existence of a number of simplices of $\barT$ whose vertices get different labels.  The result crucially depends on the following lemma, which appears in \cite[Proposition 3]{de2002polytopal} and whose underlying topological intuition is well-known: when a ball is mapped to itself so that the restriction to the boundary has degree 1 (as a map of spheres), then the interior of the ball must be covered as well.  


\begin{lemma}\label{lem:surj}
Any continuous map $f$ from a polytope to itself, which satisfies $f(F)\subseteq F$ for all faces $F$ of the polytope, is surjective.
\end{lemma}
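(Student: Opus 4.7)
The plan is to reduce the question to degree theory on the boundary $\partial P$ and then derive a contradiction from the contractibility of $P$ if $f$ is assumed non-surjective. The whole argument rests on one geometric observation: for every $x\in P$, the minimal face of $P$ containing $x$ is convex and, by hypothesis, contains $f(x)$, so the straight-line segment from $x$ to $f(x)$ stays inside that face.

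First I would verify that $f(\partial P)\subseteq \partial P$, which is immediate because every proper face lies in $\partial P$. Next I would use the straight-line homotopy $H(x,t)=(1-t)x+tf(x)$: for $x\in\partial P$, letting $F$ be the minimal face containing $x$, the face $F$ is proper (hence contained in $\partial P$), convex, and contains $f(x)$, so $H(x,t)\in F\subseteq \partial P$ for all $t\in[0,1]$. Since $H$ is jointly continuous in $(x,t)$, this gives a homotopy $\partial P\times[0,1]\to\partial P$ from $f|_{\partial P}$ to the identity. Because $\partial P$ is homeomorphic to the sphere $S^{d-1}$ (where $d=\dim P$), this shows $f|_{\partial P}$ has degree $1$ and in particular is surjective onto $\partial P$. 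The low-dimensional cases $d\leq 1$ can be handled directly (trivially for $d=0$, and by the intermediate value theorem for $d=1$, using that the two endpoints of the interval are fixed by $f$).

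To conclude, I would argue by contradiction: suppose $f$ misses some point $p\in P$. By the previous step $p$ cannot lie on $\partial P$, so $p$ is an interior point. The radial projection $\pi_p\colon P\setminus\{p\}\to\partial P$ from $p$ is then well-defined and continuous, and it restricts to the identity on $\partial P$. Hence $\pi_p\circ f\colon P\to\partial P$ is a continuous extension of $f|_{\partial P}$ to all of $P$. Because $P$ is contractible, such an extension forces $f|_{\partial P}$ to be null-homotopic as a map $\partial P\to\partial P$, contradicting the fact that its degree is $1$. The main obstacle lies in the second paragraph: one must simultaneously invoke the hypothesis $f(F)\subseteq F$ and the convexity of faces to keep the straight-line homotopy inside $\partial P$; once that boundary-homotopy is in hand, everything else is a standard degree-theoretic reductio.
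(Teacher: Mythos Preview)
Your argument is correct and matches the paper's approach exactly: the paper does not spell out a proof but cites \cite[Proposition~3]{de2002polytopal} and summarizes the underlying intuition as ``when a ball is mapped to itself so that the restriction to the boundary has degree~$1$ (as a map of spheres), then the interior of the ball must be covered as well,'' which is precisely the degree-theoretic reductio you carry out, with the straight-line homotopy inside each minimal face supplying the degree-$1$ step.
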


The map $\lambda$, which has been defined for each vertex of $\barT$, can be affinely extended on each simplex of $\barT$ to produce a piecewise affine map $\lambda\colon P\rightarrow P$. It is a continuous map and \cref{lem:sperner-labeling} implies that it satisfies the conditions of \cref{lem:surj}. 

Each of our results is then obtained by choosing a special point $\p$ in $P$ and by analyzing the nature of a simplex $\barsigma$ in $\barT$ such that $\p\in\lambda(\barsigma)$. 
Choose $\barsigma$ of minimal dimension; then each vertex of $\barsigma$ is mapped to a distinct vertex of $P$.  

Let $G(\barsigma)$ denote the bipartite graph with vertex bipartition $\{u_1,\ldots,u_m\},\{v_1,\ldots,v_n\}$, and with edges $u_iv_j$ such that $(u_i,v_j)$ is a vertex of $\lambda(\barsigma)$.
By construction, $G(\barsigma)$ is a simple bipartite graph with at most $m+n-1$ edges. 

The point $\p$, as a weighted convex combination of the vertices $(u_i,v_j)$, corresponds in a natural way to a set of weights $w_{e}(\p) \geq 0$ on the edges $e = u_{i} v_{j}$ of $G(\barsigma)$.
Writing $\p=(\a,\b)\in \Delta^{m-1}\times\Delta^{n-1}$ and expressing $\a=(a_{i})$ and $\b=(b_{j})$ in barycentric coordinates, we can see that 
\begin{equation}\label{eq:G}
\sum_{e\in\delta_{G(\barsigma)}(u_i)}w_{e}(\p)=a_i
\qquad\mbox{and}\qquad
\sum_{e\in\delta_{G(\barsigma)}(v_j)}w_{e}(\p)=b_j,
\end{equation}
where the notation $\delta_{G}(x)$ denotes the edges of a graph $G$ incident to a vertex $x$.

\subsubsection{Proof of \cref{thm:multisperner}}

With the following lemma, the proof of \cref{thm:multisperner} is almost immediate.

\begin{lemma}\label{lem:bip_deg}
Consider a bipartite graph $G=(X,Y,E)$ with non-negative weights $\omega_e$ on its edges $e\in E$ and with positive integer weights $s_x$ on its vertices $x\in X$ such that 
$\sum_{x\in X} s_x \geq |E|$.
If for all $x\in X$ and $y\in Y$ we have 
$$\sum_{e\in\delta_G(x)}\omega_e > \frac{s_x-1}{|Y|}
\qquad\mbox{and}\qquad 
\sum_{e\in\delta_G(y)}\omega_e=\frac 1 {|Y|},$$
then the degree of each vertex in $X$ is exactly $s_x$.
\end{lemma}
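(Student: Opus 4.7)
The plan is to sandwich the degree $d_x$ of each $x \in X$ between $s_x$ and itself by combining an easy upper bound on individual edge weights with the given strict lower bound at each $x$, and then to close up the argument with a global counting step.

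First I would observe that the condition $\sum_{e \in \delta_G(y)} \omega_e = \frac{1}{|Y|}$, together with non-negativity of the weights, forces every individual edge weight to satisfy $\omega_e \leq \frac{1}{|Y|}$ (take $y$ to be the $Y$-endpoint of $e$). Summing over the edges incident to an arbitrary $x \in X$ gives
\[
\sum_{e \in \delta_G(x)} \omega_e \;\leq\; \frac{d_x}{|Y|},
\]
where $d_x$ is the degree of $x$. Combined with the hypothesis $\sum_{e \in \delta_G(x)} \omega_e > \frac{s_x - 1}{|Y|}$, this yields $d_x > s_x - 1$, and since $d_x$ and $s_x$ are integers, $d_x \geq s_x$ for every $x \in X$.

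Next I would do a global count. Since $G$ is bipartite with bipartition $(X,Y)$, we have $|E| = \sum_{x \in X} d_x$. The previous step gives
\[
|E| \;=\; \sum_{x \in X} d_x \;\geq\; \sum_{x \in X} s_x \;\geq\; |E|,
\]
using the hypothesis $\sum_{x \in X} s_x \geq |E|$ at the last step. All inequalities must therefore be equalities, which forces $d_x = s_x$ for each $x \in X$, as desired.

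The whole argument is short and I do not expect a real obstacle: the only subtle point is recognizing that the $Y$-side equality together with non-negativity bounds each $\omega_e$ by $1/|Y|$, which converts the weighted lower bound at $x$ into a lower bound on the degree $d_x$; everything else is a double-counting check. No special hypothesis on connectivity or on the structure of $G$ is needed, so the proof works as stated.
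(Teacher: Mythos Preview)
Your proof is correct and follows essentially the same approach as the paper: bound each edge weight by $1/|Y|$ using the $Y$-side equality, deduce $d_x \geq s_x$ from the strict inequality at each $x$, and finish with the global count $|E| = \sum_x d_x \geq \sum_x s_x \geq |E|$.
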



\begin{proof}
The non-negativity of $\omega_e$ implies that $\omega_e\leq 1/|Y|$ for all edges $e$ of $G$. 
Then the first inequality above implies that $\frac{\deg(x)}{|Y|} > \frac{s_x-1}{|Y|}$ for all $x\in X$. 
Since the $s_x$ are integers, we see $\deg(x)\geq s_x$ for all $x\in X$.  But since the sum of the degrees $\deg(x)$ is 
equal to $|E|$, it is at most the sum of the weights $s_{x}$, and
we have $\deg(x)=s_{x}$ for all $x\in X$.
\end{proof}

\begin{proof}[Proof of \cref{thm:multisperner}, \cref{item:manylabels}]
We choose $\p=(\a,\b)$ such that $a_i = \frac{k_i - 1}{n} + \frac{1}{mn}$ and $b_j=\frac 1 n$.
According to \cref{lem:bip_deg} with $G=G(\barsigma)$, $X=\{u_1,\ldots,u_m\}$, $Y=\{v_1,\ldots,v_n\}$, $\omega_e=w_{e}(\p)$, and $s_{u_i}=k_i$, the degree in $G(\barsigma)$ of each $u_i$ is $k_i$.
This means that $\lambda_i$ assigns at least $k_i$ distinct labels on the vertices of $\sigma$, the projection of $\barsigma \in \barT$ to a simplex of $\T$.
\end{proof}

\begin{proof}[Proof of \cref{thm:multisperner}, \cref{item:manylabelings}]
We choose $\p=(\a,\b)$ such that $a_i =\frac 1 m$  and $b_j=\frac{\ell_j - 1}{m} + \frac{1}{mn}$.
According to \cref{lem:bip_deg} with $G=G(\barsigma)$, $X=\{v_1,\ldots,v_n\}$, $Y=\{u_1,\ldots,u_m\}$, $\omega_e=w_{e}(\p)$, and $s_{v_j}=\ell_j$, the degree in $G(\barsigma)$ of each $v_j$ is $\ell_j$.
This means that  $j$ is used by at least $\ell_j$ labelings on the vertices of $\sigma$, 
the projection of $\barsigma \in \barT$ to a simplex of $\T$.
\end{proof}

We remark that if one desires an algorithmic way to find $\barsigma$, the path-following proof of the polytopal Sperner lemma \cite{de2002polytopal} can be adapted for this purpose.

\subsubsection{Proof of \cref{thm:cake}}

To prove \cref{thm:cake}, we proceed with a technique introduced by the second author~\cite{su1999rental} and identify each point $\y=(y_1,\ldots,y_n)$ in $\Delta^{n-1}$ with a division of the cake into $n$ pieces: numbering the pieces from left to right, $y_j$ is the length of the $j$-th piece. To fit into the proof scheme described in \cref{subsec:prel}, we define $\lambda_i(v)$, for each player $i$ and each vertex $v$ of $\T$, to be the number (in $[n]$) of a preferred piece in the division encoded by $v$.  The ``hungry'' assumption implies that the $\lambda_i$ are Sperner labelings of $\T$.

Instead of \cref{lem:bip_deg}, we use the following lemma.

\begin{lemma}\label{lem:bip_match}
Consider a bipartite graph $G=(X,Y,E)$ with non-negative weights $\omega_e$ on its edges $e\in E$.
If for all $x\in X$ and $y\in Y$ we have 
$$\sum_{e\in\delta_G(x)}\omega_e = \frac 1 {|X|}
\qquad\mbox{and}\qquad
\sum_{e\in\delta_G(y)}\omega_e = \frac 1 {|Y|},$$
then for any subset $Y'\subseteq Y$ of size $\left\lceil\frac{|Y|-|X|}{|X|}\right\rceil$, there exists a matching covering $X$ and missing $Y'$.
\end{lemma}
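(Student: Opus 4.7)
The plan is to reduce the claim to Hall's marriage theorem. A matching of $G$ that covers $X$ and misses $Y'$ is precisely a matching saturating $X$ in the subgraph $G' := G - Y'$ (obtained by deleting $Y'$ and its incident edges), so I need to verify Hall's condition for $G'$: for every $S \subseteq X$, $|N_G(S) \setminus Y'| \geq |S|$.

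I will argue by contradiction. Suppose some $S \subseteq X$ satisfies $|N_G(S) \setminus Y'| \leq |S|-1$, and set $T := N_G(S)$, so that $|T| \leq |S|-1+|Y'|$. The key step is a double-counting of edge weights: every edge incident to $S$ has its other endpoint inside $T$, so summing the weight identity $\sum_{e \in \delta_G(x)} \omega_e = 1/|X|$ over $x \in S$ and the identity $\sum_{e \in \delta_G(y)} \omega_e = 1/|Y|$ over $y \in T$ gives $|S|/|X| \leq |T|/|Y|$, i.e., $|T| \geq |S|\,|Y|/|X|$. Combining this lower bound with the upper bound $|T| \leq |S|-1+|Y'|$ and using $|Y'| = \lceil(|Y|-|X|)/|X|\rceil$, I expect to arrive at an inequality of the form $(|S|-1)(|Y|-|X|) < 0$, which is incompatible with $|S| \geq 1$ and $|Y| \geq |X|$.

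The main (though mild) obstacle is handling the ceiling carefully. The two cases $|X| \mid (|Y|-|X|)$ and $|X| \nmid (|Y|-|X|)$ have to be separated when rewriting $|X|\lceil (|Y|-|X|)/|X|\rceil$ as a usable arithmetic expression; in each case the slack in the inequality $|X|\lceil a/|X|\rceil \leq a + |X|-1$ is just enough to force a strict contradiction, even in the extremal situation $|S|=1$. Once Hall's condition is verified on $G'$, the desired matching exists and the lemma follows immediately.
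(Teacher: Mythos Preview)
Your proposal is correct and follows essentially the same route as the paper: both arguments derive the inequality $\frac{|S|}{|X|}\le \frac{|N_G(S)|}{|Y|}$ by comparing the total edge-weight at $S$ with that at $N_G(S)$, and then feed this into Hall's condition on $G-Y'$. The only difference is presentational: the paper proceeds directly rather than by contradiction and avoids the case split on divisibility by observing that since $|N_G(S)|-|S|$ is an integer bounded below by $\frac{|Y|-|X|}{|X|}\cdot|S|\ge \frac{|Y|-|X|}{|X|}$, it is automatically at least $\left\lceil\frac{|Y|-|X|}{|X|}\right\rceil$.
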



\begin{proof}
We apply Hall's marriage theorem. Let $X'$ be any non-empty subset of $X$. 
Notice that
the weight on the edges incident to $X'$ is not larger than the weight of the edges incident to $N(X')$.  This, with the hypotheses on the weights, implies $\frac{|X'|}{|X|}\leq\frac{|N(X')|}{|Y|}$. 
Then $\left(1+\frac{|Y|-|X|}{|X|}\right)|X'|\leq |N(X')|$ and since $X'$ is non-empty, we obtain 
$$|X'|+\left\lceil\frac{|Y|-|X|}{|X|}\right\rceil \leq |N(X')|.$$ 
Thus in a graph $H$ obtained from $G$ by removing any subset $Y'\subseteq Y$ with 
$\left\lceil\frac{|Y|-|X|}{|X|}\right\rceil$ vertices, we have the inequality $|X'|\leq |N_H(X')|$ that holds for all non-empty subsets $X'$ of $X$.
\end{proof}

To prove \cref{item:secretive,item:leave}, we choose 
$\p=(\a,\b)$ such that $a_i=\frac 1 m$ and $b_j=\frac 1 n$, but with different meanings of $m$ and $n$ in each instance.
\begin{proof}[Proof of \cref{thm:cake}, \cref{item:secretive}]
Set $m=p$ and $n=k$. 
We apply \cref{lem:bip_match} with $G=G(\barsigma)$, $X=\{u_1,\ldots,u_m\}$, $Y=\{v_1,\ldots,v_n\}$, and $\omega_e=w_{e}(\p)$, 
the weight on an edge of the bipartite graph $G(\barsigma)$ that depends on $\p$.
No matter what $\lceil \frac{n-m}m\rceil$ vertices we remove from $\{v_1,\ldots,v_n\}$, 
the graph $G(\barsigma)$ has a matching covering the vertices in $\{u_1,\ldots,u_m\}$. 
This matching assigns the $m$ players to distinct pieces. 
An edge $u_iv_j$ in $G(\barsigma)$ means that player $i$ is happy with piece $j$ in the division encoded by one of the vertices of the projection $\sigma$ of $\barsigma$ in $\T$. We have thus an envy-free assignment but we are not done yet since $G(\barsigma)$ is built from several divisions.

The triangulation $\T$ being arbitrary, we can assume that $\barsigma$ is such that the diameter of $\sigma$ is arbitrarily small. Since there are only a finite number of possible simple bipartite graphs on $X$ and $Y$, we can make this diameter goes to $0$ while having always the same $G(\barsigma)$. By the closedness of the preferences, at the limit point, the matching assigning the $m$ players to distinct pieces corresponds to an envy-free assignment of a single division.
\end{proof}

\begin{proof}[Proof of \cref{thm:cake}, \cref{item:leave}]
Set $m=k$ and $n=q$. We apply \cref{lem:bip_match} with $G=G(\barsigma)$, $X=\{v_1,\ldots,v_n\}$, $Y=\{u_1,\ldots,u_m\}$, and $\omega_e=w_{e}(\p)$,
the weight on an edge of the bipartite graph $G(\barsigma)$ that depends on $\p$.
No matter what $\lceil \frac{m-n}n\rceil$ vertices we remove from $\{u_1,\ldots,u_m\}$, 
the graph $G(\barsigma)$ has a matching covering the vertices in $\{v_1,\ldots,v_n\}$. 
This matching assigns the $n$ pieces to distinct players in an envy-free way. 
We conclude as in \cref{item:secretive}.
\end{proof}

\subsubsection{Sketch of proof of \cref{thm:rental-survivor}}

The proof consists in applying the relabeling technique proposed by Frick et al.~for the proof in Section 4 of \cite{frick2017achieving} and then in following the steps of the proof of \cref{thm:cake}, \cref{item:leave}. 

\subsubsection{Proof of \cref{thm:workers}}

The proof is almost the same as for \cref{thm:cake}, \cref{item:leave}.

Consider the map $\phi\colon\Delta^{n-1}\rightarrow \R_{\geq 0} ^n$ defined by 
$$\phi(\y)=\left(\frac {By_1} {k_1},\ldots,\frac {By_n} {k_n}\right).$$ It bijectively maps the points of $\Delta^{n-1}$ with choices of wages satisfying the budget constraint.
To fit into the proof scheme described in \cref{subsec:prel}, we define $\lambda_i(v)$, for each player $i$ and each vertex $v$ of $\T$, to be the index $j$ of a factory maximizing $u_i(j,\phi(\y))$, with $\y$ being the coordinates of $v$. The assumption on $u_i$ makes $\lambda_i$ a Sperner labeling of $\T$.

Choose $\p=(\a,\b)$ such that $a_i=\frac 1 m$ and $b_j=\frac{k_j} m$. Since $\sum_{j=1}^nk_j=m$,  the point $\p$ is indeed in $\Delta^{m-1}\times\Delta^{n-1}$, as required by  \cref{subsec:prel}. Set $G=G(\barsigma)$, $X=\{v_1,\ldots,v_n\}$, $Y=\{u_1,\ldots,u_m\}$, and $\omega_e=w_{e}(\p)$. Now, instead of using \cref{lem:bip_match}, we apply directly Hall's marriage theorem. Because of \cref{eq:G}, we have $\sum_{e\in\delta_G(u_i)}\omega_e=\frac 1 m$ for all $i$ and  $\sum_{e\in\delta_G(v_j)}\omega_e=\frac{k_j} m$ for all $j$. For any subset $X'\subseteq X$, we have $$\sum_{j\colon v_j\in X'}k_j=m\sum_{j\colon v_j\in X'}\sum_{e\in\delta_G(v_j)}\omega_e\leq m\sum_{i\colon u_i\in N(X')}\sum_{e\in\delta_G(u_i)}\omega_e=|N(X')|.$$ Hall's marriage theorem (actually, an obvious weighted extension) implies then that there exists a subset $F\subseteq E$ covering exactly $k_j$ times each vertex $v_j\in X$ (i.e., each factory) and exactly once each vertex in $Y$ (i.e., each worker). We conclude as in the proof of \cref{thm:cake}, \cref{item:secretive}. It is in this last step that the continuity of the $u_i$'s matters.

\begin{remark}\label{rk:gale}
While 
all theorems of that section can also be obtained with the averaging technique introduced by Gale~\cite{gale1984equilibrium} for proving his ``permutation'' generalization of the KKM lemma---the technique used in the papers Asada et al.~\cite{asada2017fair} and Frick et al.~\cite{frick2017achieving}---our approach makes clear the symmetry between the labelings and the labels.
\end{remark}




\section{Multilabeled versions of Fan's lemma}\label{sec:fan}

We now establish multilabeled versions of Fan's lemma in direct analogy to the two parts of the multilabeled Sperner lemma.



\subsection{Fan's lemma and $\Z_2$-index}
A {\em free simplicial $\Z_2$-complex} is a simplicial complex on which there is a free $\Z_{2}$-action.  A {\em Fan labeling} of a free simplicial $\Z_2$-complex is a labeling of its vertices with non-zero integers such that (i) no adjacent vertices have labels that sum to zero (adjacency condition), and (ii) the two vertices of any orbit have labels that sum to zero (antisymmetry condition). 

In such a labeling, a simplex is {\em alternating} (with respect to the labeling) if the signs alternate when the vertices are ordered according to the absolute value of their labels. The \emph{sign} of an alternating simplex (either positive or negative) is the sign of its first (lowest) label by absolute value.

The following lemma is due to Fan~\cite{Fa56}. It deals with a centrally symmetric triangulation of a $d$-dimensional sphere $\S^{d}$, which is a free simplicial $\Z_2$-complex via the antipodal map. The antisymmetry condition for a Fan labeling requires then that labels at antipodal vertices sum to zero.

\begin{namedtheorem}[Fan's lemma]
In any centrally symmetric triangulation of $\S^{d}$ with a Fan labeling, there is an alternating $d$-dimensional simplex.
\end{namedtheorem}

Tucker's lemma~\cite{tucker1945some} is equivalent to the special case: 
{\em In a centrally symmetric triangulation of $\S^{d}$ with a Fan labeling, there is at least one vertex with a label whose absolute value is larger than $d$.}

It has recently been realized~\cite[Proposition 1]{AlHoMe17} that Fan's lemma remains true when the $d$-sphere $\S^d$ is replaced by any free simplicial $\Z_2$-complex with $\Z_2$-index equal to $d$. 
We remind the reader that the {\em $\Z_2$-index} of a free simplicial $\Z_2$-complex $\K$, denoted $\ind(\K)$, is the minimal dimension of the sphere to which there is a continuous map from $\K$ commuting with the $\Z_2$-action.  
If $\K$ is a triangulated sphere, the $\Z_2$-action is the antipodal map, and the Borsuk-Ulam theorem is the equality $\ind(\S^d)=d$. 

\begin{namedtheorem}[Fan's lemma for a simplicial $\Z_2$-complex]
In any free simplicial $\Z_2$-complex $\K$ with a Fan labeling, there is an alternating $\ind(\K)$-dimensional simplex.
\end{namedtheorem}

In fact, by symmetry, there is a positive alternating simplex as well as a negative alternating simplex.



\subsection{Coincidences of alternating simplices}
The next theorem can be seen as the Fan-type generalization of \cref{thm:multisperner}, \cref{item:manylabels}.  Using the derivation of Sperner's lemma from Fan's lemma (see~\cite{nyman2013borsuk, vzivaljevic2010oriented}), this theorem, when $\K$ is a centrally symmetric triangulation of the $(n-1)$-dimensional sphere, actually provides yet another proof of \cref{thm:multisperner}, \cref{item:manylabels}.  
Fan's lemma is the special case $m=1$ on a triangulated $d$-sphere (whose index is $d$).

\begin{thm}
\label{thm:multifan}
Let $\lambda_1,\ldots,\lambda_m$ be $m$ Fan labelings of a free simplicial $\Z_2$-complex $\K$.
For any choice of non-negative integers $d_1,\ldots,d_m$ summing to $\ind(\K)$, 
there exists a simplex $\sigma$
in $\K$ that for each $i$ 
has a $d_i$-dimensional alternating face with respect to $\lambda_i$.
\end{thm}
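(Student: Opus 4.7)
The plan is to imitate the argument of \cref{subsec:prel} and the proof of \cref{thm:multisperner}, \cref{item:manylabels}, with Fan's lemma for free simplicial $\Z_2$-complexes playing the role of the polytopal Sperner lemma. The aim is to package the $m$ Fan labelings into a single Fan labeling on an auxiliary free $\Z_2$-complex, invoke the one-labeling theorem, and translate the resulting alternating simplex into the structure claimed.

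First I would construct an auxiliary free simplicial $\Z_2$-complex whose $\Z_2$-index is $\ind(\K) + m - 1$, so that an alternating simplex in it carries $\ind(\K) + m$ vertices: exactly the budget needed to accommodate $d_i + 1$ vertices in each of $m$ groups, one per labeling. A natural candidate is the join $\overline{\K} = \K * \S^{m-2}$, with $\S^{m-2}$ realized as $\partial \diamond^{m-1}$ and equipped with its free antipodal action; the standard $\Z_2$-index bound for joins then yields $\ind(\overline{\K}) \geq \ind(\K) + m - 1$.

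Next I would combine the $\lambda_i$ into a single Fan labeling $\mu$ on $\overline{\K}$: pick $N$ larger than every $|\lambda_i(v)|$, shift the labels of $\lambda_i$ into the disjoint absolute-value slab $[(i-1)N+1,\, iN]$, and assign to the $\S^{m-2}$-vertices ``separator'' labels sitting strictly between consecutive slabs. Antisymmetry of $\mu$ is inherited from each $\lambda_i$ and from the antipodal action on $\S^{m-2}$; the adjacency condition holds because labels drawn from distinct slabs have disjoint absolute values and therefore cannot sum to zero. Applying Fan's lemma for $\Z_2$-complexes to $(\overline{\K},\mu)$ then produces an alternating $(\ind(\K)+m-1)$-simplex $\barsigma$, and ordering its $\ind(\K)+m$ vertices by $|\mu|$ distributes them among the $m$ slabs; the vertices lying in the $i$-th slab project to a common simplex $\sigma \in \K$ on which they form an alternating face with respect to $\lambda_i$, yielding the sought-after $\sigma$.

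The main obstacle is to force the slab sizes to equal the prescribed values $d_i + 1$, rather than some other distribution summing to $\ind(\K)+m$. In the Sperner analog, this matching is enforced by choosing the special point $\p \in P$ and running the degree-counting argument of \cref{lem:bip_deg} on the bipartite graph $G(\barsigma)$. The $\Z_2$-equivariant counterpart appears to require either an equivariant polytopal variant of \cref{lem:surj} combined with a Bourgin--Yang-type surjectivity statement applied to the extended labeling map, or a very precise choice of the separator labels on the $\S^{m-2}$-factor that pins down the slab boundaries at the prescribed positions in the alternating ordering. I expect the bulk of the technical work to concentrate in this dimension-matching step.
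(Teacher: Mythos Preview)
Your plan has a concrete gap before the obstacle you identify: the labeling $\mu$ on $\overline{\K}=\K*\S^{m-2}$ is not well-defined on the $\K$-vertices. In the join, each vertex $v\in V(\K)$ occurs exactly once, so it receives a single integer $\mu(v)$; yet you want vertices of $\barsigma$ coming from $\K$ to fall into different slabs according to which $\lambda_i$ is being read. There is no mechanism in your construction telling which $\lambda_i$ to apply to a given $v$. This is precisely where the Sperner analogy breaks: in \cref{subsec:prel} the polytope $\Delta^{m-1}\times\Delta^{n-1}$ contains $m$ distinct copies $(u_i,v)$ of each $v$, one per labeling, and $\lambda(u_i,v)$ can legitimately use $\lambda_i$. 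The join $\K*\S^{m-2}$ creates no such copies. Once this is noticed, the later sentence ``the vertices lying in the $i$-th slab project to a common simplex $\sigma\in\K$'' has no content, because nothing in $\mu$ ties a $\K$-vertex to a particular index $i$.

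The paper takes a different route that avoids both this issue and your acknowledged slab-size obstacle. Instead of enlarging the space, it works on $\sd(\K)$, whose vertices are simplices $\sigma\in\K$; since a simplex sees all $m$ labelings at once, one can encode their joint behavior in a single integer. Concretely, set $i^{\star}(\sigma)$ to be the first index $i$ for which $\sigma$ fails to contain a $d_i$-dimensional $\lambda_i$-alternating face (or $m$ if none fails), and define $\mu(\sigma)=\pm\big(d_1+\cdots+d_{i^{\star}(\sigma)-1}+\alt_{\lambda_{i^{\star}(\sigma)}}(\sigma)\big)$ with sign the sign of that alternation. One checks this is a Fan labeling of $\sd(\K)$, so Fan's lemma for $\Z_2$-complexes yields a chain $\sigma_0\subset\cdots\subset\sigma_{\ind(\K)}$ with strictly increasing $|\mu|$, forcing $|\mu(\sigma_{\ind(\K)})|>\ind(\K)$ and hence $i^{\star}(\sigma_{\ind(\K)})=m$ with all $d_i$ achieved. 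The prescribed sizes $d_i$ are hard-wired into the thresholds of $\mu$, so no after-the-fact ``slab matching'' is needed; this is the missing idea in your sketch.
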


\begin{example}\label{ex:multifan}
An illustration of the possible patterns asserted by the theorem is given by \cref{twospaces-fan}. 
On a triangulated $2$-sphere with $m=2$ labelings $\lambda_1, \lambda_2$, suppose $d_1+d_2 = 1 + 1 =  \ind(\K)=2$.  Then \cref{thm:multifan} asserts the existence of a simplex $\sigma \in \T$ like this one, in which there is a $1$-dimensional alternating face with respect to $\lambda_1$ (the left edge with labels $+1,-7$) and a $1$-dimensional alternating face with respect to $\lambda_2$ (the right edge with labels $+1,-3$).
\end{example}


\begin{figure}[h]
\begin{center}
\includegraphics[height=1in]{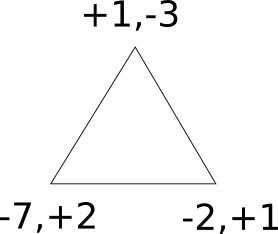}
\caption{\label{twospaces-fan}Labeled simplex used in \cref{ex:multifan} to illustrate \cref{thm:multifan}}
\end{center}
\end{figure}

Alishahi~\cite[Lemma 17]{alishahi2016colorful} introduced a new technique for proving Fan's lemma and its generalizations for other group actions. This technique consists in defining a labeling on the first barycentric subdivision of the centrally symmetric triangulation $\T$---the labels recording the length of the alternation on the simplices of $\T$---and then in using Tucker's lemma to show the existence of alternating simplices of certain dimensions. Our approach for proving \cref{thm:multifan} relies on this idea, and we will again use this technique in the proof of \cref{thm:multifan-dual-weaker} below. As noticed by Alishahi (personal communication), this technique can actually be used to get an extension of \cref{thm:multifan} where $\Z_2$ is replaced by any $\Z_q$.

We shall use the following notation.  Let $\alt_{\lambda_i}(\sigma)$ be the number of vertices of a maximal alternating face of $\sigma$ with respect to the labeling $\lambda_i$. Let $s_{\lambda_i}(\sigma)$ be the sign of the alternation of such a face, i.e., the sign of the label in the alternation with the smallest absolute value. It is straightforward to check that $s_{\lambda_i}(\sigma)$ is well-defined when $\lambda_i$ is a Fan labeling.  Given a simplicial complex $\K$, let $\sd(\K)$ denote the \emph{barycentric subdivision} of $\K$: vertices of $\sd(\K)$ are the simplices of $\K$, and the simplices of $\sd(\K)$ are chains of faces $\K$ ordered by inclusion.

\begin{proof}[Proof of \cref{thm:multifan}]
For $\sigma \in \K$, 
let $i^{\star}(\sigma)$ be the smallest labeling index $i$ 
such that $\sigma$ does not have an alternating face of dimension $d_{i}$ according to $\lambda_{i}$ (hence $\alt_{\lambda_i}(\sigma) \leq d_{i}$) 
and if there is no such index $i$, call $\sigma$ \emph{desirable} and let $i^{\star}(\sigma) = m$.  Our goal is to show the existence of desirable simplices.
Define 
$$
\mu(\sigma)=\pm [ d_{1} + \cdots + d_{i^{\star}(\sigma) - 1}  +\alt_{\lambda_{i^{\star}(\sigma)}}(\sigma) ]
$$
where the sign is chosen to be the sign of the alternation in $\lambda_{i^{\star}(\sigma)}$.  
Since $d_{1} + \cdots + d_{i^{\star}(\sigma) - 1}$ is at most $\ind(\K)-d_{m}$ and 
$\alt_{\lambda_{i^{\star}(\sigma)}}(\sigma)$ is at most $\dim\K+1$, we see $\mu$ is a labeling of the vertices of $\sd(\T)$ by non-zero integers being at most
$\ind(\K) - d_{m} + \dim(\K) + 1$ in absolute value.
Note that a simplex $\sigma$ is desirable if and only if $|\mu(\sigma)| > \ind(\K)$.  

We claim $\mu$ is a Fan labeling of $\sd(\K)$.  
Antisymmetry of $\mu$ follows easily from its definition.
For the adjacency condition of $\mu$ on $\sd(\K)$, we consider $\tau$ a face of $\sigma$, and check that $\mu(\tau)$ and $\mu(\sigma)$ cannot sum to $0$.
We have $i^{\star}(\tau) \leq i^{\star}(\sigma)$. If $i^{\star}(\tau) < i^{\star}(\sigma)$, then $|\mu(\tau)| \leq d_{1} + \cdots + d_{i^{\star}(\sigma) - 1} < |\mu(\sigma)|$ (even when $\sigma$ is desirable), hence $\mu(\tau)$ and $\mu(\sigma)$ cannot sum to $0$. Suppose now that $i^{\star}(\tau) = i^{\star}(\sigma)$. Then denoting this value by $i^{\star}$, we see
that $|\mu(\tau)| = |\mu(\sigma)|$ can only happen when 
$\alt_{\lambda_{i^{\star}}}(\tau) =  \alt_{\lambda_{i^{\star}}}(\sigma)$ (even when $\tau$ or $\sigma$ is desirable).
But then the maximal alternating face of $\tau$ is a maximal alternating face of $\sigma$, and the label of lowest absolute values in $\tau$ and $\sigma$ must have the same sign, hence $\mu(\tau)=\mu(\sigma)$ so their sum is not $0$, either.

Then by Fan's lemma for a simplicial $\Z_2$-complex, there is a simplex of dimension $\ind(\K)$ in $\sd(\K)$ specified by a maximal chain of simplices in $\K$:
$$\sigma_0\subseteq\cdots\subseteq\sigma_{\ind(\K)}$$ 
with alternation $$1\leq \mu(\sigma_0)<\cdots<(-1)^{\ind(\K)}\mu(\sigma_{\ind(\K)}).
$$ 
This latter chain of inequalities implies immediately that $|\mu(\sigma_{\ind(\K)})| >\ind(\K)$, which implies that $\sigma_{\ind(K)}$ is desirable.
\end{proof}

The proof above relies on Fan's lemma which is known to have a constructive proof when $\K$ is a triangulated $d$-sphere containing a flag of hemispheres \cite{prescott-su}. In that latter case, we have thus also a constructive method for finding a desirable simplex in our multilabeled extension of the Fan lemma.

We show now two applications of \cref{thm:multifan}. A typical consequence of Fan's lemma in combinatorics is the existence of large colorful bipartite subgraphs in proper colorings of graphs with large topological lower bounds. The first application is a strengthening of this result. The second one is in the context of the Hobby-Rice theorem, also called the ``continuous necklace-splitting theorem'', and consensus-halving.

\subsubsection{Graph application}\label{subsec:graph}

Since the foundational paper of Lov\'asz solving the Kneser conjecture~\cite{Lo79}, there is a whole machinery for designing topological lower bounds on the chromatic number of graphs. The {\em Hom complex} of a graph $G=(V,E)$, denoted $\Hom(K_2,G)$, is the poset whose elements are the pairs $(A,B)$ of non-empty disjoint subsets of $V$ inducing a complete bipartite graph and whose order $\preceq$ is given by: $(A,B)\preceq(A',B')$ if $A\subseteq A'$ and $B\subseteq B'$.  

We shall associate $\Hom(K_2,G)$ with its order complex: a simplicial complex whose vertices are poset elements and simplices are poset chains.
There is a natural free $\Z_2$-action on this complex given by exchanging $A$ and $B$. 

One of the largest topological lower bounds is provided by the $\Z_2$-index of the Hom complex:
$$\chi(G)\geq\ind(\Hom(K_2,G))+2.$$

In a properly colored graph, a {\em colorful} subgraph is a subgraph whose vertices get distinct colors.  
The above inequality was strengthened by Simonyi, Tardif, and Zsb\'an~\cite{SiTaZs13}, who showed that in a properly colored graph $G$, there is in fact a colorful complete bipartite subgraph 
$K_{\lceil d/2\rceil +1,\lfloor d/2\rfloor +1}$ where $d = \ind(\Hom(K_2,G))$.
We can use our \cref{thm:multifan} to extend their result to multiple proper colorings.

\begin{thm}
\label{thm:color}
Consider a graph $G$ colored with $m$ proper colorings $c_1,\ldots,c_m$. For any choice of positive integers $d_1,\ldots,d_m$ summing to $\ind(\Hom(K_2,G))$, there exists a complete bipartite subgraph that for each $i$ contains a colorful $K_{\lceil d_i/2\rceil + 1,\lfloor d_i/2\rfloor + 1}$ with respect to $c_i$.
\end{thm}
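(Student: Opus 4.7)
The plan is to mirror the Simonyi-Tardif-Zsb\'an argument but route it through \cref{thm:multifan} applied to $\K=\Hom(K_2,G)$, which is a free simplicial $\Z_2$-complex under the swap action $(A,B)\leftrightarrow(B,A)$. For each proper coloring $c_i$ of $G$, I would label a vertex $(A,B)$ of the order complex of $\Hom(K_2,G)$ by
\[
\lambda_i(A,B)=\begin{cases}+\max c_i(A\cup B)&\text{if that maximum is attained on }A,\\-\max c_i(A\cup B)&\text{if it is attained on }B,\end{cases}
\]
noting that since $A\times B$ spans a complete bipartite subgraph of $G$ and $c_i$ is proper, no color can occur on both sides, so the sign is unambiguous. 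Antisymmetry of $\lambda_i$ is immediate from the swap action. For adjacency along $(A,B)\preceq(A',B')$, the maximum color weakly grows: if it strictly grows, the labels have different absolute values, and if it stays equal, the maximum is still attained on the same side, so the two labels are identical rather than opposite. Hence each $\lambda_i$ is a Fan labeling of $\K$.

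Next I would apply \cref{thm:multifan} to the $m$ Fan labelings $\lambda_1,\ldots,\lambda_m$ with the positive integers $d_1,\ldots,d_m$ summing to $\ind(\K)$. The conclusion is a chain $\sigma=(A_0,B_0)\prec\cdots\prec(A_T,B_T)$ in $\Hom(K_2,G)$ containing, for every $i$, a subchain of length $d_i+1$ whose $\lambda_i$-labels alternate in sign with strictly growing absolute values. The top element $(A_T,B_T)$ provides the single ambient complete bipartite subgraph $K_{|A_T|,|B_T|}$ of $G$ inside which all the desired colorful sub-bipartite graphs will live; this is exactly what the theorem is asking for.

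The final step, and main obstacle, is the color bookkeeping that reads off a colorful $K_{\lceil d_i/2\rceil+1,\lfloor d_i/2\rfloor+1}$ from the $\lambda_i$-alternating subchain. Let $k_0<k_1<\cdots<k_{d_i}$ be the successive maximum $c_i$-colors along that subchain, alternately deposited on the $A$ and $B$ sides, and let $(A_{j_0},B_{j_0})$ be its bottom element. By nesting, each $k_s$ survives in the appropriate side of the top of the subchain, giving $\lceil(d_i+1)/2\rceil$ of them on one side and $\lfloor(d_i+1)/2\rfloor$ on the other. The decisive extra ingredient is that the side of $(A_{j_0},B_{j_0})$ that does not contain $k_0$ is non-empty by definition of $\Hom(K_2,G)$, and every color appearing there is strictly smaller than $k_0$ and thus distinct from all the $k_s$; this supplies one more color on that side and upgrades the counts to $\lceil d_i/2\rceil+1$ and $\lfloor d_i/2\rfloor+1$. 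Selecting one vertex per color on each side then produces the required colorful $K_{\lceil d_i/2\rceil+1,\lfloor d_i/2\rfloor+1}$ inside $(A_T,B_T)$. This last count is essentially the Simonyi-Tardif-Zsb\'an observation applied to each $c_i$ separately; what is new is that \cref{thm:multifan} lets all $m$ such subgraphs sit inside one common chain, and hence inside one common ambient bipartite subgraph.
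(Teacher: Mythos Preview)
Your argument is correct and follows essentially the same route as the paper: define the Fan labelings on $\Hom(K_2,G)$ via the maximum color and its side, apply \cref{thm:multifan}, and then extract the extra color from the non-empty ``other side'' of the bottom element of each alternating subchain. Aside from an immaterial flip of the sign convention and your slightly more explicit identification of the common ambient bipartite subgraph as the top element $(A_T,B_T)$ of the chain $\sigma$, your proof and the paper's coincide.
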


\begin{proof}
Let $G=(V,E)$.  Think of each coloring as a map $c_i: V \rightarrow\{1,2,\ldots\}$.
Recall that each vertex of $\Hom(K_2,G)$ is a pair $(A,B)$ of non-empty disjoint subsets of $V$ that induce a complete bipartite subgraph of $G$.  Define 
$$\lambda_i(A,B) = \pm \max_{v\in A\cup B}c_i(v)$$ 
with sign $-$ (resp. $+$) if the maximum is attained in $A$ (resp. $B$). 
Then $\lambda_i$ is a Fan labeling, since antisymmetry is obvious from the definition, and the adjacency condition follows from $c_i$ being a proper coloring: if $(A,B)\preceq(A',B')$ and 
$\max_{v\in A\cup B}c_i(v)$ is achieved in $A$ while $\max_{v\in A'\cup B'}c_i(v)$ is achieved in $B'$, then the two maxima must be different $c_i$ colors because each vertex in $A \subseteq A'$ is adjacent to each vertex in $B'$.

Since $\sum_i d_i=\ind(\Hom(K_2,G))$, \cref{thm:multifan} implies there exists a simplex of $\Hom(K_2,G)$ with alternating $d_i$-dimensional faces, for $i=1,\ldots,m$.  Each such face is a chain
$$(A_0,B_0) \preceq (A_1,B_1) \preceq \cdots \preceq (A_{d_i}, B_{d_i})$$
on which $\lambda_i$ alternates; 
by the definition of $\lambda_i$, there are $v_j \in A_j \cup B_j$ for $j=0,\ldots,d_i$ such that 
$\lambda_i(A_j,B_j)=c_i(v_j)$ and $c_i(v_0) < \cdots < c_i(v_{d_i})$.  

Suppose without loss of generality $v_0 \in B_0$.  Then $a = \max_{v\in A_0} c_i(v)$ is achieved by some $v=\bar v$ and since $c_i$ is proper, the color $a$ is strictly less than $c_i(v_0)$.
This means that $\bar v$ is in the bipartite subgraph 
$(A_{d_i}, B_{d_i})$ but must be distinct from the $d_i + 1$ vertices $v_j$, $j=0,\ldots,d_i$ because they all have larger colors than $\bar v$.  The bipartite subgraph induced by these $d_i+2$ vertices must be colorful according to $c_i$ and of type $K_{\lceil d_i/2\rceil +1,\lfloor d_i/2\rfloor +1}$ as desired.
\end{proof}

Choosing $d_i\simeq\frac{1}{m} \ind(\Hom(K_2,G))$ for all $i$ produces \cref{cor:color} stated in the introduction.

\subsubsection{Consensus-halving}\label{subsec:consensus}

Given a family $\M$ of absolutely continuous measures on $[0,1]$, we define a {\em $t$-splitting} to be a partition of $[0,1]$ into $t+1$ intervals $I_j$ such that 
$$\mu \left(\bigcup_{\text{\textup{ odd $j$}}}I_j\right)=\mu \left(\bigcup_{\text{\textup{ even $j$}}}I_j\right)$$
for all $\mu \in \M$.  
We do not require that the intervals be numbered $1,2,\ldots,n$ when going from left to right. (Note that however, whether or not we add this condition does not matter: if we have such a partition without this condition on the numbering, we have it also with it since either they can be numbered from left to right, or there are two adjacent intervals with indices of the same parity; in the latter case, they can be merged and we can add a new interval of length $0$ and renumber all intervals $1,2,\ldots,n$ going from left to right.)

The Hobby-Rice theorem~\cite{hobby1965moment} asserts that if $\M$ has size $t$, then there is a $t$-splitting of $\M$.  There have been many generalizations of that theorem; see for example~\cite{alon1987splitting,de2008splitting}. 

Here, we propose yet another generalization of this result, based on the our \cref{thm:multifan}. Note that the measures are not necessarily probability measures.



\begin{thm}
\label{thm:multi-halving}
Consider finite collections $\M_1,\ldots,\M_m$ of absolutely continuous measures on $[0,1]$, a positive integer $n$, and positive integers $k_1,\ldots,k_m$ summing to $m+n-1$. Then there exists a partition of $[0,1]$ into $n$ intervals $I_1,\ldots,I_n$ such that, for each $i \in [m]$, one of the properties holds:
\begin{itemize}
\item These intervals provide an $(n-1)$-splitting of $\M_i$.
\item 
The equality $$\mu\left(\bigcup_{\text{\textup{ odd $j$}}}I_j\right)-\mu\left(\bigcup_{\text{\textup{ even $j$}}}I_j\right)=\pm \max_{\mu'\in\M_i}\left|\mu'\left(\bigcup_{\text{\textup{ odd $j$}}}I_j\right)-\mu'\left(\bigcup_{\text{\textup{ even $j$}}}I_j\right)\right|$$
is attained for at least $k_i$ measures $\mu$ in $\M_i$, each sign, $+$ and $-$, being attained at least $\lfloor k_i/2\rfloor$ times.
\end{itemize}
\end{thm}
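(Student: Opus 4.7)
The plan is to apply \cref{thm:multifan} to a centrally symmetric triangulation of $\S^{n-1}$, equipped with Fan labelings built from the measure families, and then to pass to a limit of increasingly fine triangulations.

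First, I would identify each point of $\S^{n-1}=\{x\in\R^n:\sum_j x_j^2=1\}$ with a partition of $[0,1]$ into $n$ intervals $I_1(x),\ldots,I_n(x)$ of respective lengths $x_1^2,\ldots,x_n^2$ (read from left to right in a fixed order), where the sign of $x_j$ records whether $I_j(x)$ belongs to the odd or to the even class. The antipodal map on $\S^{n-1}$ swaps the two parity classes, and for each measure $\mu$ the map
$$f_\mu(x)=\sum_j\sign(x_j)\,\mu(I_j(x))$$
is continuous and odd---continuity across the hyperplanes $x_j=0$ comes for free from absolute continuity, since the degenerating intervals carry vanishing mass. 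The partition $x$ is an $(n-1)$-splitting of $\M_i$ precisely when $f_\mu(x)=0$ for every $\mu\in\M_i$.

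Next, on a sufficiently fine centrally symmetric triangulation $\T$ of $\S^{n-1}$, I would define for each $i\in[m]$ and each vertex $v\in V(\T)$ a label $\lambda_i(v)=\sign(f_{\mu^\star}(v))\cdot j$, where $\mu^\star\in\M_i$ maximizes $|f_\mu(v)|$ (ties broken by the smallest index $j$). The antisymmetry $\lambda_i(-v)=-\lambda_i(v)$ is immediate from the oddness of the $f_\mu$. The adjacency condition is the delicate point: if two adjacent vertices $v,v'$ gave $\lambda_i(v)+\lambda_i(v')=0$, then the same measure $\mu_j$ would be argmax at both vertices with opposite signs of $f_{\mu_j}$, and uniform continuity on the compact sphere combined with fineness of the mesh would force $\max_{\mu\in\M_i}|f_\mu|$ to be vanishingly small at both vertices. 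A small perturbation of the measures together with a generic choice of $\T$ removes this degeneracy at the finite vertex set and turns each $\lambda_i$ into a bona fide Fan labeling.

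Then I would apply \cref{thm:multifan} with $d_i=k_i-1$; these are non-negative integers summing to $(m+n-1)-m=n-1=\ind(\S^{n-1})$. This produces a simplex $\sigma\in\T$ that, for each $i$, carries a $d_i$-dimensional alternating face $\tau_i$ with respect to $\lambda_i$. The $k_i$ vertices of $\tau_i$ bear labels of strictly increasing absolute value with alternating signs, hence identify $k_i$ distinct measures of $\M_i$, each being the argmax of $|f_\mu|$ at its vertex with the recorded sign, and each sign appearing at least $\lfloor k_i/2\rfloor$ times. Letting the mesh of $\T$ shrink to $0$ along a sequence of triangulations, compactness of $\S^{n-1}$ forces the simplices to converge (along a subsequence) to a single point $x^\star\in\S^{n-1}$; finiteness of the label alphabet permits a further subsequence on which the $k_i$-tuple of measures and their sign pattern stabilize for every $i$. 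By continuity of the $f_\mu$, each of these measures remains an argmax of $|f_\mu|$ in $\M_i$ at $x^\star$ with the recorded sign, so either the common extremal value is zero and the first alternative holds for that $i$, or it is positive and the second alternative holds with the prescribed sign distribution.

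The main obstacle I anticipate is the rigorous handling of the adjacency condition above. I would address it by performing the argument for each perturbation parameter $\eta>0$, extracting a limit partition $x^\star_\eta$ verifying the desired alternative for the perturbed measures, and then letting $\eta\to 0$ and using compactness once more to recover a partition for the original measures. A secondary subtlety arises when $k_i$ exceeds $|\M_i|$, so that the second alternative is unattainable: since $k_i\leq n$ forces $|\M_i|\leq n-1$ in that regime, the Hobby-Rice theorem already guarantees an $(n-1)$-splitting of $\M_i$, which can be dovetailed with the Fan argument for the remaining indices.
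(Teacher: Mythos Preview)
Your approach is essentially the paper's: build Fan labelings on a centrally symmetric triangulation of the $(n-1)$-sphere by recording, at each vertex, the index and sign of a measure of maximal signed discrepancy, apply \cref{thm:multifan} with $d_i=k_i-1$, and pass to the limit as the mesh shrinks. The paper parametrizes by the $L_1$-sphere $\partial\Diamond^n$ rather than the Euclidean sphere with squared coordinates, but this is immaterial.

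The real gap is in your handling of the adjacency condition. You correctly observe that a violation $\lambda_i(v)+\lambda_i(v')=0$ forces $\max_{\mu\in\M_i}|f_\mu|$ to be small near the offending edge, but your proposed fix---``a small perturbation of the measures together with a generic choice of $\T$''---does not work in general. When $|\M_i|\le n-1$, the Hobby-Rice theorem guarantees that the zero set of $\max_{\mu\in\M_i}|f_\mu|$ is nonempty, and this persists under any small perturbation of the measures; no mesh refinement or genericity argument will then make $\lambda_i$ a Fan labeling. The paper's remedy is concrete: first treat the case where no $\M_i$ admits an $(n-1)$-splitting, so that compactness yields a uniform lower bound $\delta>0$ on the maximum and a fine enough mesh secures the adjacency condition; then reduce the general case to this one by adjoining to every $\M_i$ a fixed family $\mathcal{P}$ of $n$ absolutely continuous measures with disjoint supports and total weight $\varepsilon$. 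The augmented families $\M_i'=\M_i\cup\mathcal{P}$ never admit an $(n-1)$-splitting (the $n-1$ cuts cannot bisect all $n$ disjoint supports), so the argument runs, and one finishes by letting $\varepsilon\to 0$: for each $i$, either all discrepancies over $\M_i$ are at most $\varepsilon$ (yielding the first alternative in the limit), or the maximum over $\M_i'$ is attained only inside $\M_i$ (yielding the second).

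Your final paragraph about ``dovetailing'' a separate Hobby-Rice splitting for the indices $i$ with $k_i>|\M_i|$ cannot work: the partition $I_1,\ldots,I_n$ must be common to all $i$, so you cannot choose it independently for some of them. This difficulty disappears once the auxiliary measures are in place, since then $|\M_i'|\ge n\ge k_i$ for every $i$.
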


\Cref{thm:multi-halving} generalizes the Hobby-Rice theorem already for $m=1$: if $\M_1$ has $n-1$ measures, then the second possibility cannot hold since $k_1=n$, and thus there must be an $(n-1)$-splitting. The case $m=1$ is close to a result by P\'alv\H{o}lgyi \cite[Corollary 4.6]{palvolgyi2009combinatorial} that ensures in the discrete case that one can somehow ``control'' the imbalance in an $(n-1)$-splitting when the number of measures is $n$. In P\'alv\H{o}lgyi's result, we can even decide for each measure which part of the partition between odd and even indices is advantaged. It is a generalization of a result by Simonyi~\cite{Si08}, which is the same result without the control on the imbalance.

If we remove the condition on the finiteness of the $\M_i$, the theorem does not hold anymore as shown by the following simple example for $m=1$. For every positive integer $r$, we introduce the function
$$f_r\colon x\in[0,1]\longmapsto\left\{\begin{array}{ll}r^3 & \mbox{if $\displaystyle{x\in\left[\frac 1 {r+1},\frac 1 r \right]}$} \smallskip\\ 0 & \mbox{otherwise,}\end{array}\right.$$ 
and the measure $\nu_r$ whose density is given by $f_r$. Let $\M_1$ be the set of all $\nu_r$. Consider now any partition of $[0,1]$ into $n$ intervals $I_1,\ldots,I_n$. There is an infinite number of $r$ such that $ [\frac 1 {r+1},\frac 1 r]$ is fully contained into one of the $I_j$'s. Thus, the $n$ intervals do not form an $(n-1)$-splitting and we have moreover 
$$\sup_{\mu'\in\M_1}\left|\mu'\left(\bigcup_{\text{\textup{ odd $j$}}}I_j\right)-\mu'\left(\bigcup_{\text{\textup{ even $j$}}}I_j\right)\right|=+\infty$$ since $\lim_{r\rightarrow+\infty}r^3(\frac 1 r - \frac 1 {r+1})=+\infty$. None of the properties ensured by the theorem can hold.

The Hobby-Rice theorem also yields a social science interpretation in terms of {\em consensus halving}~\cite{SiSu03}. If $[0,1]$ represents a cake to be cut and each measure in $\M$ represents a hungry person, then the theorem says there is a \emph{consensus-halving} among $t$ people with just $t$ cuts: for such a cut, there is consensus among all people in $\M$ that the odd-index pieces are the same size as the even-index pieces.  


An interpretation of \cref{thm:multi-halving} in that spirit goes as follows: 
\emph{Consider $m$ finite groups of people and 
non-negative
integers $d_1,\ldots,d_m$ summing to $n-1$. Then given any cake, there exists a division of that cake into $n$ pieces, each assigned to one of two portions $A$ and $B$, such that for each group, 
either: all people in that group believe $A$ and $B$ are exactly the same size; or:
there exists $\gamma_{i} > 0$ such that 
at least 
$\lceil (d_i+1)/2\rceil$
 believe $A$ is larger by exactly $\gamma_{i}$ and at least $\lfloor (d_i+1)/2\rfloor$ believe that $B$ is larger by exactly $\gamma_{i}$ and everyone else is somewhere in between.
}


\begin{proof}[Proof of \cref{thm:multi-halving}]
Let us first assume that no $\M_i$ admits an $(n-1)$-splitting.

We arbitrarily index the measures in each $\M_i$ with positive integers: $\mu_1^i,\mu_2^i,\ldots$  
Let $\T$ be a centrally symmetric triangulation of the $(n-1)$-dimensional unit $L_1$-sphere 
$$\partial\Diamond^n=\left\{(y_1,\ldots,y_n)\in\R^n\colon\sum_{j=1}^{n}|y_j|=1\right\}.$$ 
Let $Y_{0}=0$ and $Y_j=\sum_{j'=1}^j|y_{j'}|$. 
For each vertex $v$ of $\T$ with coordinates $(y_1,\ldots,y_n)$, we define $\lambda_i(v)=\pm a^*$, where $a^*$ is the smallest $a$ for which 
$\left|\sum_{j=0}^{n-1} \sign(y_j)~\mu_{a}^i([Y_j,Y_{j+1}])\right|$ is maximal, 
and the sign of $\lambda_i(v)$ is the sign of the expression in the absolute value.

We check that $\lambda_i$ is a Fan labeling. Antisymmetry is clearly satisfied. By compactness of $\partial\Diamond^n$, finiteness of the $\M_i$'s,  and continuity of the measures, there is some $\delta>0$ so that $$\max_a\left|\sum_{j=0}^{n-1}\sign(y_j)~\mu_{a}^i([Y_j,Y_{j+1}])\right|>\delta$$ for all $i$
and all $(y_1,\ldots,y_n)\in\partial\Diamond^n$. We can choose $\T$ with a sufficiently small mesh size so that $$(y_1,\ldots,y_n)\longmapsto\sum_{j=0}^{n-1}\sign(y_j)~\mu_{a}^i([Y_j,Y_{j+1}])$$ varies by less than $\delta$ on any simplex, and for any $i$ and $a$.

Then, since the dimension of $\partial\Diamond^n$ is $n-1$, according to \cref{thm:multifan}, there is a simplex with an alternating $(k_i-1)$-face with respect to each $\lambda_i$. Making the mesh size of $\T$ go to $0$ and using the compactness of $\partial\Diamond^n$, we get the existence of intervals $J_1,\ldots,J_n$ and a partition $A,B$ of $[n]$ such that, for each $i$, the equality
$$\mu\left(\bigcup_{j\in A}J_j\right)-\mu\left(\bigcup_{j\in B}J_j\right)=\pm \max_{\mu'\in\M_i}\left|\mu'\left(\bigcup_{j\in A}J_j\right)-\mu'\left(\bigcup_{j\in B}J_j\right)\right|$$
is attained $k_i$ times, with the `$+$' sign for at least $\lfloor k_i/2\rfloor$ measures $\mu$ in $\M_i$ and is attained with the `$-$' sign for at least $\lfloor k_i/2\rfloor$ measures $\mu$ in $\M_i$. Now, by merging consecutive intervals whose indices belong to the same subset $A$ or $B$, we get the desired statement, except that we may have less than $n$ intervals; in that case, just add empty intervals to complete the collection.

Now, let us consider the general case. We arbitrarily choose a collection $\mathcal{P}$ of $n$ absolutely continuous measures $\rho_{1},\ldots,\rho_{n}$ on $[0,1]$ with disjoint supports and with total weight equal to some $\varepsilon>0$. We define $\M'_i=\M_i\cup\mathcal{P}$. No $\M_i'$ has an $(n-1)$-splitting. Applying what we already proved on $\M'_1,\ldots,\M'_m$, we get that there are $n$ intervals $I_1,\ldots,I_n$ (depending on $\varepsilon$) such that, for each $i$, one of the two properties holds:
\medskip
\NewList
\begin{easylist}\ListProperties(Numbers1=l, FinalMark1={)})
§ \label{a}$\left|\mu\left(\bigcup_{\text{\textup{ odd $j$}}}I_j\right)-\mu\left(\bigcup_{\text{\textup{ even $j$}}}I_j\right)\right|\leq \varepsilon$ for all $\mu\in\M_i$.
\medskip
§ \label{b}$\mu\left(\bigcup_{\text{\textup{ odd $j$}}}I_j\right)-\mu\left(\bigcup_{\text{\textup{ even $j$}}}I_j\right)=\pm \max_{\mu'\in\M_i}\left|\mu'\left(\bigcup_{\text{\textup{ odd $j$}}}I_j\right)-\mu'\left(\bigcup_{\text{\textup{ even $j$}}}I_j\right)\right|$ is attained for at least $k_i$ measures in $\M_i$, each sign, $+$ and $-$, being attained at least $\lfloor k_i/2\rfloor$ times.
\end{easylist} 
\medskip
Indeed, this follows from noting that for each $i$, either we are in case \ref{a}, or the maximum in not attained for a measure in $\mathcal{P}$.

Consider a sequence of values for $\varepsilon$ converging to $0$. Up to taking a subsequence,  we can assume that each $i$ falls under the same case \ref{a} or the same case \ref{b} for all these values of $\varepsilon$, and, by finiteness of the $\M_i$'s, we can even assume that when it falls under case \ref{b}, the equality is always attained with exactly the same measures and the same signs. By compactness and absolute continuity of the measures, we get the sought conclusion.
\end{proof}

The necklace-splitting theorem ensures that there is a certain rounding property for the Hobby-Rice theorem when one works with ``beads'' instead of measures; see~\cite{alon1986borsuk, goldberg1985bisection}. We do not know whether the rounding property still holds for \cref{thm:multi-halving}.

\subsection{Further multilabeled generalizations of Fan's lemma}

Similarly, the next theorem can be seen as the Fan-type generalization of \cref{thm:multisperner}, \cref{item:manylabelings}. \Cref{cor:multifan-dual} provides an alternative proof of \cref{thm:multisperner}, \cref{item:manylabelings} in a same way \cref{thm:multifan} provides an alternative proof of \cref{thm:multisperner}, \cref{item:manylabels}.



\begin{thm}
\label{thm:multifan-dual-weaker}
Consider a free simplicial $\Z_2$-complex $\K$  with $\ind(\K)=n-1$. Let $\lambda_1,\ldots,\lambda_m$ be $m$ Fan labelings of $\K$ with labels $\{\pm 1,\ldots,\pm N\}$. 
For any choice of positive integers 
$\ell_1,\ldots,\ell_N$ summing to $m+N-1$, 
there exists a simplex $\sigma = \langle v_{1},\ldots,v_{n} \rangle$ in $\K$, 
label numbers $1\leq j_1 \leq \cdots \leq j_n\leq N$, 
and indices $i_1, \ldots , i_n$ in $[m]$
such that 
\begin{enumerate}[label=\textup{(\alph*)}]
  \item\label{v} $\lambda_{i_{k}}(v_{k})=(-1)^{k} j_{k}$,
  \item\label{j} if $j_{k-1}=j_{k}$, then $i_{k-1} < i_{k}$, and
  \item\label{l} for each $k$, labels $+ j_{k}$ or $-j_{k}$ are present on $\sigma$ in at least $\ell_{j_k}$ of the labelings.
\end{enumerate}
\end{thm}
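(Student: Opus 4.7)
The plan is to adapt Alishahi's barycentric-subdivision technique from the proof of \cref{thm:multifan} to the present dual setting. We construct a Fan labeling $\mu$ on $\sd(\K)$ (which inherits the same $\Z_2$-index $n-1$) whose magnitude encodes ``progress toward the desired sub-simplex,'' and then invoke Fan's lemma for free simplicial $\Z_2$-complexes.

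For $\sigma\in\K$ and $j\in[N]$, let $w_j(\sigma)=|\{i\in[m]:\exists v\in\sigma,\ |\lambda_i(v)|=j\}|$ be the number of labelings witnessing magnitude $j$ on $\sigma$. Call a sub-simplex $\sigma'\subseteq\sigma$ of cardinality $t$ a \emph{$(-)$-configuration of length $t$} when its vertices can be ordered $v_1,\dots,v_t$ and paired with magnitudes $1\leq j_1\leq\cdots\leq j_t\leq N$ and indices $i_1,\dots,i_t\in[m]$ (with $i_{k-1}<i_k$ when $j_{k-1}=j_k$) so that $\lambda_{i_k}(v_k)=(-1)^k j_k$ and $w_{j_k}(\sigma')\geq\ell_{j_k}$ for every $k$; replacing $(-1)^k$ by $(-1)^{k+1}$ gives the notion of \emph{$(+)$-configuration}. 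Let $T_\varepsilon(\sigma)$ be the largest length of an $\varepsilon$-configuration in $\sigma$, capped at $n$. Define $\mu(\sigma)=-T_-(\sigma)$ if $T_-(\sigma)>T_+(\sigma)$, $\mu(\sigma)=+T_+(\sigma)$ if $T_+(\sigma)>T_-(\sigma)$, and otherwise via an antisymmetric tie-breaking rule. The antisymmetry $\mu(\bar\sigma)=-\mu(\sigma)$ then follows from the antisymmetry of the Fan labelings $\lambda_i$: the $\Z_2$-action sends $(-)$-configurations of $\sigma$ to $(+)$-configurations of $\bar\sigma$, so $T_\varepsilon(\bar\sigma)=T_{-\varepsilon}(\sigma)$.

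For the adjacency condition on $\sd(\K)$, observe that if $\tau\subset\sigma$ in $\K$ then any $\varepsilon$-configuration in $\tau$ is one in $\sigma$ (the witness counts $w_j$ are measured on the configuration itself, which is unchanged), so $T_\varepsilon(\tau)\leq T_\varepsilon(\sigma)$. Same-sign pairs $\mu(\tau),\mu(\sigma)$ sum to a nonzero value automatically, and opposite-sign pairs satisfy $|\mu(\tau)|<|\mu(\sigma)|$ outside the fully-tied regime. Since $|\mu|\leq n=\ind(\K)+1$ by the cap, Fan's lemma for $\Z_2$-complexes applied to $\sd(\K)$ produces a simplex $\sigma^\star\in\K$ with $|\mu(\sigma^\star)|>\ind(\K)$, forcing $T_\varepsilon(\sigma^\star)=n$ for some $\varepsilon$; passing to $\overline{\sigma^\star}$ if $\varepsilon=+$ provides a $(-)$-configuration of length $n$, i.e., the desired $(n-1)$-dimensional sub-simplex $\sigma=\langle v_1,\dots,v_n\rangle$ satisfying (a), (b), and (c). The main technical obstacle will be the careful design of the antisymmetric tie-breaking rule to preserve adjacency in the fully-tied regime, together with the treatment of low-dimensional simplices on which $T_-=T_+=0$ could occur (requiring a base signal analogous to the $\alt\geq 1$ observation underpinning the original Alishahi construction).
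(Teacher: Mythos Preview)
Your approach differs fundamentally from the paper's, and the gap you yourself flag---the tie-breaking rule---is not a technicality but the crux of the argument. As stated, the scheme does not close: take $\sigma$ tied at level $s$, and let $\tau\subset\sigma$ be a witnessing $(-)$-configuration of size $s$. There is no reason the \emph{same} $s$ vertices also carry a $(+)$-configuration of length $s$, so one may well have $T_+(\tau)<s$ and hence $\mu(\tau)=-s$; adjacency then forces any tie-break to set $\mu(\sigma)=-s$. But a witnessing $(+)$-configuration $\tau'\subset\sigma$ with $T_-(\tau')<s$ symmetrically forces $\mu(\sigma)=+s$, and nothing prevents $\tau$ and $\tau'$ from coexisting as faces of $\sigma$. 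So no antisymmetric rule respecting your magnitude convention can be adjacent to both, and the Fan condition fails. The same clash recurs at the cap $n$.

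The paper sidesteps all of this by abandoning the ``how long is the best configuration so far'' invariant. It sets $j^\star(\sigma)$ to be the \emph{largest} $j$ with $r_j(\sigma)\geq\ell_j$ (which exists since $\sum_j r_j(\sigma)\geq m>\sum_j(\ell_j-1)$) and $i^\star(\sigma)$ to be the largest $i$ for which $\lambda_i$ hits $\pm j^\star(\sigma)$ on $\sigma$, and puts
\[
\mu(\sigma)=\pm\bigl[m\cdot(j^\star(\sigma)-1)+i^\star(\sigma)\bigr],
\]
the sign being that of the (unique, since $\lambda_{i^\star}$ is a Fan labeling) occurrence of $\pm j^\star$ in $\lambda_{i^\star}$ on $\sigma$. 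Both $j^\star$ and $i^\star$ are monotone under inclusion, and whenever they agree on $\tau\subset\sigma$ the sign agrees too; so $\mu$ is a Fan labeling of $\sd(\K)$ with no tie-breaking whatsoever. The alternating $(n-1)$-simplex in $\sd(\K)$ is then a chain $\sigma_1\subsetneq\cdots\subsetneq\sigma_n$ along which $|\mu|$ strictly grows and the sign alternates; setting $j_k=j^\star(\sigma_k)$, $i_k=i^\star(\sigma_k)$ and taking $v_k$ to be a vertex newly witnessing that pair yields (a)--(c) on $\sigma=\sigma_n$. The point is that the paper \emph{builds} the configuration incrementally along the chain rather than searching for it wholesale, and it is precisely this incremental encoding that supplies a canonical sign and eliminates the tie-breaking obstruction your scheme runs into.
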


Note here the theorem asserts that there is an ``alternating'' sequence of labels, though the label absolute values $j_{k}$  are not necessarily distinct. 
\Cref{v} guarantees the asserted alternating labels appear on different vertices of $\sigma$.
\Cref{j} shows that if a sequence of asserted alternating labels have identical absolute values, 
then those labels successively appear in a sequence of labelings of increasing index.
When $m=1$, the theorem becomes the usual Fan lemma.



\begin{proof}[Proof of \cref{thm:multifan-dual-weaker}]
For $\sigma\in \K$, let $r_j(\sigma)$ be the number of labelings in which $-j$ or $+j$ is present on $\sigma$.  We define $\mu(\sigma)$ as follows. 
Let $j^{\star}(\sigma)$ be the largest $j$ such that $r_j(\sigma)\geq \ell_j$. 
Such a $j^{\star}(\sigma)$ exists because $\sum_{j=1}^N(\ell_j-1)=m-1$ and $\sum_{j=1}^Nr_j(\sigma)\geq m$ (each labeling contributes at least one unit to this sum). 
Let $i^{\star}(\sigma)$ be the largest $i$ such that $\lambda_{i}(v)= \pm j^{\star}(\sigma)$ for a vertex $v \in \sigma$.

Now set 
$$\mu(\sigma)= \pm [ m \cdot (j^{\star}(\sigma)-1)  + i^{\star}(\sigma) ]$$
where the $\pm$ sign is determined by the sign of the label $\pm j^{\star}(\sigma)$ that appears on $\sigma$ in $\lambda_{i^{\star}(\sigma)}$ 
(both labels cannot appear in $\sigma$ since $\lambda_{i^{\star}(\sigma)}$ is a Fan labeling, hence adjacent vertices in $\sigma$ cannot have labels that sum to zero).


We claim $\mu$ is a Fan labeling on $\sd(\K)$ with labels from $\{\pm 1,\ldots,\pm mN\}$.  Antisymmetry of $\mu$ follows from the symmetry of $j^{\star}$ and $i^{\star}$ and the antisymmetry of $\lambda_{i^{\star}}$ on $\K$.  Also, $\mu$ satisfies the adjacency condition, because if $\tau \subset \sigma$ are adjacent vertices in $\sd(\K)$,
then either $j^{\star}(\tau) < j^{\star}(\sigma)$ in which case $\mu(\tau) + \mu(\sigma) \neq 0$, 
or $j^{\star}(\tau) = j^{\star}(\sigma)$ and $i^{\star}(\tau) < i^{\star}(\sigma)$ in which case $\mu(\tau) + \mu(\sigma) \neq 0$, or $j^{\star}(\tau) = j^{\star}(\sigma)$ and $i^{\star}(\tau) = i^{\star}(\sigma)$ and the adjacency condition on 
$\lambda_{i^{\star}}$ ensures $\mu(\tau)= \mu(\sigma)$.

So $\mu$ is a Fan labeling and according to Fan's lemma, there is an $(n-1)$-dimensional simplex in $\sd(\K)$, which corresponds to a chain of simplices in $\K$ of successive dimension (here $\sigma_{1}$ has dimension $0$):
$$\sigma_1\subseteq\cdots\subseteq\sigma_{n}$$ 
with $$1\leq -\mu(\sigma_1)<\cdots<(-1)^{n}\mu(\sigma_{n})\leq mN.$$ 

We claim that $\sigma=\sigma_n$ is the simplex we are looking for, with label number $j_k$ given by $j^{\star}(\sigma_k)$, and index $i_k$ given by $i^{\star}(\sigma_k)$. Indeed, the fact that $|\mu|$ increases going from $\sigma_{k-1}$ to $\sigma_{k}$ means that $j^{\star}$ and/or $i^{\star}$ also increases by including a vertex $v_{k}$ in the simplex $\sigma_{k}$;
hence this $v_{k}$ must be labeled by $\pm j_k$ in $\lambda_{i_{k}}$, with the same sign as $\mu(\sigma_k)$, i.e., $(-1)^k$. This gives \cref{v,j}. The definition of $j^{\star}(\sigma_k)$ implies~\cref{l}. 
\end{proof}

We say that $m$ Fan labelings $\lambda_1,\ldots,\lambda_m$ are {\em compatible} if $\lambda_i(u)+\lambda_{i'}(u')\neq 0$ for any adjacent vertices $u,u'$ and any pair $i,i'$ (where $u$ is not considered as being adjacent to itself).

\begin{cor}\label{cor:multifan-dual}
Consider a free simplicial $\Z_2$-complex $\K$  with $\ind(\K)=n-1$. 
Let $\lambda_1,\ldots,\lambda_m$ be $m$ compatible Fan labelings of $\K$ with labels $\{\pm 1,\ldots,\pm N\}$. 
For any choice of positive integers $\ell_1,\ldots,\ell_N$ summing to $m+N-1$,
there exists a simplex $\sigma$ and $1\leq j_1< \cdots < j_n \leq N$ such that for each $k$, 
labels $+j_k$ or $-j_k$ are present on $\sigma$ in at least $\ell_{j_k}$ of the labelings.
\end{cor}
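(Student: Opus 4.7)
The plan is to derive \cref{cor:multifan-dual} directly from \cref{thm:multifan-dual-weaker} by exploiting the compatibility hypothesis to upgrade the weak inequality $j_{1}\le\cdots\le j_{n}$ to a strict one. The presence condition in the corollary is exactly condition \cref{l} of the theorem, so there is nothing to prove on that front, and the entire content of the corollary lies in ruling out consecutive equalities $j_{k-1}=j_{k}$.

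First I would apply \cref{thm:multifan-dual-weaker} to the compatible Fan labelings $\lambda_{1},\ldots,\lambda_{m}$ to produce a simplex $\sigma=\langle v_{1},\ldots,v_{n}\rangle$, label numbers $1\le j_{1}\le\cdots\le j_{n}\le N$, and indices $i_{1},\ldots,i_{n}\in[m]$ satisfying properties \cref{v,j,l}. Then I would argue by contradiction: suppose that $j_{k-1}=j_{k}$ for some $k$. Property \cref{j} forces $i_{k-1}<i_{k}$, so in particular $i_{k-1}\ne i_{k}$. Property \cref{v} then yields
\[
\lambda_{i_{k-1}}(v_{k-1})+\lambda_{i_{k}}(v_{k})=(-1)^{k-1}j_{k-1}+(-1)^{k}j_{k}=0.
\]
Since $\sigma$ has dimension $n-1$, its $n$ vertices are distinct, so $v_{k-1}$ and $v_{k}$ are distinct vertices of a simplex of $\K$ and are therefore adjacent. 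Applying the compatibility hypothesis to the two labelings $\lambda_{i_{k-1}},\lambda_{i_{k}}$ and the adjacent vertices $v_{k-1},v_{k}$ forbids the displayed sum from vanishing, a contradiction.

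Consequently $j_{k-1}<j_{k}$ for every $k$, which combined with property \cref{l} delivers exactly the simplex and strictly increasing sequence of label numbers required by the corollary. There is essentially no obstacle beyond observing that compatibility \emph{across different labelings} is really what is needed here: \cref{j} guarantees $i_{k-1}\ne i_{k}$ precisely in the problematic case $j_{k-1}=j_{k}$, so the forbidden sum always mixes two distinct labelings and the single-labeling adjacency condition built into a Fan labeling would not suffice to rule it out.
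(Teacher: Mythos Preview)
Your argument is correct and is exactly the approach taken in the paper: apply \cref{thm:multifan-dual-weaker} and use compatibility together with condition \cref{v} to rule out $j_{k-1}=j_k$. The paper compresses this into a single sentence, while you have spelled out the contradiction; your additional remark that \cref{j} forces $i_{k-1}\neq i_{k}$ is not strictly needed for the contradiction (compatibility is stated for \emph{all} pairs $i,i'$), but it is a valid and illuminating observation about why the extra hypothesis is genuinely required.
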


\begin{proof}
We apply \cref{thm:multifan-dual-weaker}. 
Because of the additional condition, \cref{v} implies that $j_k<j_{k+1}$ for all indices $k$.
\end{proof}

\begin{remark}
In the proofs of \cref{thm:multifan,thm:multifan-dual-weaker}, the labeling $\mu$ is increasing. This implies that the same proofs actually provide slightly stronger versions of these results, where the $\Z_2$-index $\ind(\cdot)$ is replaced by the always non-smaller cross-index $\Xind(\cdot)$; see~\cite{SiTaZs13} for definition of this latter index and discussion about it. In turn, this implies that the slightly stronger versions of \cref{thm:color,cor:color,cor:multifan-dual} with the cross-index in place of the index are also true. However, while there are simplicial complexes with distinct  $\Z_2$-index and cross-index, it is not known whether this can occur for Hom complexes of graphs; see~\cite{SiTaZs13}.
\end{remark}

Note that in contrast to \cref{thm:multifan}, \cref{cor:multifan-dual} requires the Fan labelings to be compatible.  The following example below shows that without this hypothesis, the corollary is not true.

\begin{example}
\label{bad-example}
Let $\T$ be a symmetric triangulation of the unit circle $\S^{1}$ with vertices at $(\pm 1,0)$ and $(0,\pm 1)$, and sufficiently small mesh size (less than $0.1$ will do).
Let $\lambda_{1}$ be anti-symmetric labeling of the vertices such that 
$\lambda_{1}((1,0))=+2$, 
$\lambda_{1}((-1,0))=-2$, 
and otherwise $\lambda_{1}((x,y))$ is $+1$ if $y>0$ and $-1$ if $y<0$.
Let $\lambda_{2}$ be anti-symmetric labeling of the vertices such that 
$\lambda_{2}((0,1))=+2$, 
$\lambda_{2}((0,-1))=-2$, 
and otherwise $\lambda_{2}((x,y))$ is $+1$ if $x>0$ and $-1$ if $x<0$.

These are Fan labelings $\lambda_{i}\colon V(\T)\rightarrow \{\pm 1, \pm 2 \}$ that satisfy the hypotheses of \cref{cor:multifan-dual} with $n=m=N=2$ except that they are not compatible, since any simplex near $\p=(1/\sqrt{2}, -1/\sqrt{2})$ will have adjacent vertices with labels of $-1$ in $\lambda_{1}$  and $+1$ in $\lambda_{2}$.  
The conclusion of \cref{cor:multifan-dual} now fails for $\ell_{1}=1, \ell_{2} =2$, since 
we must have $j_{1}=1, j_{2}=2$, but the small mesh size of the triangulation guarantees there is no simplex with labels $+2$ or $-2$ in both labelings.
\end{example}

However, 
\Cref{bad-example} does satisfy the conditions of \cref{thm:multifan-dual-weaker} for $n=m=N=2$. We can see how it applies when $\ell_{1}=1, \ell_{2} =2$.  Since $\lambda_{1}$ and $\lambda_{2}$ cannot simultaneously have labels $\pm 2$ for vertices on the same simplex $\sigma$, it must be the case that \cref{thm:multifan-dual-weaker} produces a simplex $\sigma$ with label numbers $j_{1}=j_{2}=1$ and $i_{1}=1, i_{2}=2$.  This there is an alternating sequence of labels $-1, +1$ on $\sigma$ each of which makes the redundant claim that the labels $\pm 1$ appear in a least $\ell_{1}=1$ labelings and at least $\ell_{2}=2$ labelings.
For instance, this will occur for a simplex near $\p$.  

The next result is a multilabeled version of Bacon's lemma~\cite{bacon}, which states that in a Fan labeling of $\S^{n-1}$, with labels taken in $\{\pm 1,\ldots,\pm n\}$, all feasible subsets of $n$ distinct labels occur as labels of a simplex. The proof of this multilabeled version illustrates the fact that Gale's averaging trick (mentioned in \cref{rk:gale}) can also be used for proving Fan-type results. 

\begin{prop}\label{prop:gale-kyfan}
Consider a free simplicial $\Z_2$-complex $\K$ with $\ind(\K)=n-1$. Let $\lambda_1,\ldots,\lambda_{n}$ be $n$ compatible Fan labelings of $\K$ with labels in $\{\pm 1,\ldots,\pm n\}$ with the additional condition that $\lambda_i(u)+\lambda_{i'}(u)\neq 0$ for any pair of indices $i,i'$ and any vertex $u$.
Then, for every $(\alpha_1,\ldots,\alpha_{n})\in\{-1,+1\}^{n}$, there exists a simplex $\sigma$ of $\K$ and a permutation $\pi$ of $[n]$ such that for each $j\in[n]$, the integer $\alpha_j\cdot j$ is a value taken by $\lambda_{\pi(j)}$ on $\sigma$.
\end{prop}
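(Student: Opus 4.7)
The plan is to deploy a Gale-style averaging trick. From the $n$ compatible Fan labelings I construct a single $\Z_2$-equivariant continuous map $\phi\colon |\K|\to\partial\Diamond^n$, use the index hypothesis to force $\phi$ to be surjective, and extract the desired permutation from the barycentric coordinates of a preimage of the target point $\alpha/n$ via Hall's marriage theorem.

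Concretely, identify $\partial\Diamond^n$ with the unit $L_1$-sphere in $\R^n$ and write its vertices as $e_{+k}=e_k$ and $e_{-k}=-e_k$. For each vertex $v\in V(\K)$, set $\phi(v)=\tfrac{1}{n}\sum_{i=1}^{n}e_{\lambda_i(v)}$ and extend piecewise linearly. Antisymmetry of each $\lambda_i$ yields $\phi(-v)=-\phi(v)$, so $\phi$ is $\Z_2$-equivariant. The two hypotheses on the labelings conspire precisely to forbid coordinate-wise cancellation: the additional condition handles single vertices, while compatibility extends this across any simplex, so that for every simplex $\sigma\in\K$ and every label magnitude $k\in[n]$, all occurrences of $\pm k$ as $\lambda_i(w)$ with $w\in\sigma$ and $i\in[n]$ carry one and the same sign. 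This guarantees $\|\phi(x)\|_1=1$ on every simplex, so $\phi$ lands in $\partial\Diamond^n\cong\S^{n-1}$. Because $\ind(\K)=n-1$, the map $\phi$ must be surjective: if some $p\notin\phi(|\K|)$, then by equivariance $-p\notin\phi(|\K|)$ as well, and post-composing with the $\Z_2$-equivariant deformation retraction $\S^{n-1}\setminus\{p,-p\}\simeq\S^{n-2}$ would produce a $\Z_2$-map $|\K|\to\S^{n-2}$, contradicting the index. In particular $\alpha/n\in\phi(\sigma)$ for some simplex $\sigma$, giving coefficients $c_v\geq 0$ on the vertices of $\sigma$ with $\sum_{v}c_v=1$ and $\alpha/n=\sum_{v}c_v\,\phi(v)$.

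Expanding this identity coordinate by coordinate and invoking the same no-cancellation property, I obtain for each $k\in[n]$ both $\sum_{v,i:\,\lambda_i(v)=\alpha_k k}c_v=1$ and the vanishing $c_v=0$ whenever some $\lambda_i(v)$ disagrees in sign with $\alpha_{|\lambda_i(v)|}$. Let $B$ be the bipartite graph between label magnitudes $[n]$ and labelings $[n]$ with an edge $(k,i)$ whenever $\lambda_i(w)=\alpha_k\cdot k$ for some $w\in\sigma$. For any $K\subseteq[n]$, summing the first identity over $k\in K$ and noting that at any $v$ with $c_v>0$ the set $\{i:\lambda_i(v)\in\{\alpha_k k:k\in K\}\}$ sits inside $N_B(K)$, I get $|K|\leq|N_B(K)|$. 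Hall's marriage theorem then supplies a perfect matching of $B$, which is the sought permutation $\pi$.

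The principal obstacle is guaranteeing that $\phi$ genuinely lands in $\partial\Diamond^n$ rather than the interior of $\Diamond^n$: without the combined force of the additional condition and compatibility, cancellations in the linear extension would shrink the image and break the surjectivity argument. The sensitivity to compatibility is already visible in \cref{bad-example}, which shows that \cref{cor:multifan-dual} itself fails once compatibility is dropped, and the same kind of pathology would kill Hall's condition in the present proof.
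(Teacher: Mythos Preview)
Your proof is correct and follows essentially the same approach as the paper: construct the averaged $\Z_2$-equivariant map into $\partial\Diamond^n$ via Gale's trick, invoke the index hypothesis to get surjectivity, take a preimage of $\alpha/n$, and extract the permutation via Hall's marriage theorem. The only cosmetic difference is that the paper keeps the individual maps $L_i$ around and verifies Hall's condition using the quantities $z_{ij}=(L_i(\y))_j$, whereas you work directly with the barycentric coefficients $c_v$; the two bookkeeping schemes are interchangeable.
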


Note that the condition of the proposition is stricter than of \cref{cor:multifan-dual} since it requires that 
among the labels found at a single vertex, no pair of labels sums to zero.

\begin{proof}[Proof of \cref{prop:gale-kyfan}]
Use each $\lambda_i$ to construct a piecewise affine map $L_{i}$ from 
the underlying space $\|\K\|$ to $\partial\Diamond^{n}$ in the following way:
for each vertex $v$ set $L_{i}(v)=\pm  e_{|\lambda_{i}(v)|}$ 
(where $e_{k}$ is the $k$-th basis vector) 
choosing the sign to agree with $\lambda_{i}(v)$,
then extend $L_{i}$ linearly across each simplex. 
%
Using Gale's averaging trick mentioned in \cref{rk:gale}, 
we define $L(\x)=\frac 1{n}\sum_{i=1}^{n}L_i(\x)$ for every $\x\in\|\K\|$. 
Because of the compatibility conditions and $\lambda_i(u)+\lambda_{i'}(u)\neq 0$, 
the map $L$ has still its image in $\partial\Diamond^{n}$. 
Since $L$ is a $\Z_2$-map and since $\K$ has $\Z_2$-index equal to $n-1$, 
the map $L$ is surjective. 
(Otherwise, a $\Z_2$-map from $\K$ into $\S^{n-2}$ would exist by a standard topological argument.) 

Consider $\p=(p_1,\ldots,p_{n})$ defined by $p_j=\frac 1 {n}\alpha_j$. 
Since $L$ is surjective, there exists $\y\in\|\K\|$ such that $L(\y)=\p$. 
Define $z_{ij}$ to be the $j$-th component of $L_i(\y)$; 
note that $z_{ij}$ is positive (resp.~negative) if and only if $+j$ (resp.~$-j$) appears as a $\lambda_{i}$ label in a minimal simplex containing $\y$, because $\lambda_{i}$ is a Fan labeling.
We have $\sum_{i=1}^{n}|z_{ij}|=1$ for all $j$ 
(since $L(\y)=\p$)
and $\sum_{j=1}^{n} |z_{ij}|=1$ for all $i$
(since $L(\y) \in \partial\Diamond^{n}$).

Consider the bipartite graph $G$, 
with on one side the vertices $i=1,\ldots,n$ 
and on the other side the vertices $j=1,\ldots,n$, and with edges the pairs $ij$ such that $z_{ij}\neq 0$. 
For every subset $X$ of $j$-vertices, we have 
$$
|X|=\sum_{i=1}^{n} \sum_{j \in X} |z_{ij}|  \leq  \sum_{i \in N(X)} \sum_{j=1}^{n} |z_{ij}| = |N(X)|,
$$ 
the inequality following from noting the right double sum is a sum over more edges.
Then Hall's marriage theorem ensures that $G$ has a matching covering the $j$-vertices. For any such $j$, we define $\pi(j)$ to be the integer $i$ with which $j$ is matched.

The pair we are looking for is $(\sigma,\pi)$, where $\sigma$ is a simplex of $\K$ containing $\y$. Indeed, we have $z_{\pi(j) j} \neq 0$, which implies that $\lambda_{\pi(j)}(v)=\alpha_j\cdot j$ for at least one vertex of $\sigma$.
\end{proof}

\begin{remark}
We probably could get other generalizations of \cref{prop:gale-kyfan} in the spirit of the ``Sperner''-versions of \cref{thm:cake}, with a similar approach, by playing with the point $\p$ used in the proof. But we were not interested in going further in that direction.
\end{remark}

\section{Bapat's theorem and Lee-Shih's formula}\label{sec:bapat}

As mentioned in the introduction, Bapat's theorem~\cite{bapat1989constructive} is the first multilabeled version of Sperner's lemma. It implies Gale's permutation generalization of the KKM lemma, but it is more general, since it has a quantitative conclusion.

\begin{namedtheorem}[Bapat's theorem]
Let $\T$ be a triangulation of $\Delta^{n-1}$ with $n$ Sperner labelings $\lambda_1,\ldots,\lambda_n$. Consider the pairs $(\sigma,\pi)$, where $\sigma$ is an $(n-1)$-dimensional simplex of $\T$ and $\pi$ is a bijection $V(\sigma)\rightarrow[n]$, such that the $\lambda_{\pi(v)}(v)$ for $v\in V(\sigma)$ are all different. For each such pair, order the vertices of $\sigma$ so that $\lambda_{\pi(v)}(v)$ is increasing along them. Then the difference between the number of such pairs with $\sigma$ positively oriented by this order and the number of such pairs with $\sigma$ negatively oriented is equal to $n!$ in absolute value.
\end{namedtheorem}

While writing this paper, the natural question of whether a similar generalization of Fan's lemma holds arose. We were not able to settle this question. However, for some special triangulations, it is easy to get a statement in that spirit, as we explain now.



Any centrally symmetric triangulation $\T$ of $\S^{n-1}$ provides a triangulation $\T/\Z_2$ of the $(n-1)$-dimensional projective space, obtained by identifying antipodal simplices.

\begin{thm}\label{thm:multi-bapat}
Let $\T$ be a centrally symmetric triangulation of $\S^{n-1}$
with $n$ compatible Fan labelings $\lambda_1,\ldots,\lambda_n$. If the simplicial complex $\T/ \Z_2$ 
is balanced, then there are at least $n!$ pairs $(\sigma,\pi)$ with $\sigma\in\T$ and $\pi$ a bijection $V(\sigma)\rightarrow[n]$, such that 
$$0<-\lambda_{\pi(v_1)}(v_1)<\lambda_{\pi(v_2)}(v_2)<\cdots<(-1)^{n}\lambda_{\pi(v_{n})}(v_{n})$$ 
where $\langle v_1,\ldots,v_{n}\rangle=\sigma$.
\end{thm}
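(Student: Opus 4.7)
The plan is to use the balancedness hypothesis to manufacture, for each permutation $\rho$ of $[n]$, a separate Fan labeling $\mu_\rho$ of $\T$, then apply the standard Fan lemma to each $\mu_\rho$ and read off $n!$ pairwise distinct pairs $(\sigma,\pi)$. Concretely, since $\T/\Z_2$ is balanced, fix a proper $n$-coloring of its vertices and lift it to a coloring $c\colon V(\T)\to[n]$; by construction $c$ is $\Z_2$-invariant, that is $c(-v)=c(v)$, and its restriction to the vertex set of any $(n-1)$-simplex of $\T$ is a bijection onto $[n]$. For every permutation $\rho$ of $[n]$, define
$$\mu_\rho(v)\;=\;\lambda_{\rho(c(v))}(v).$$

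The first step is to verify that each $\mu_\rho$ is itself a Fan labeling on $\T$. Antisymmetry is immediate:
$$\mu_\rho(-v)=\lambda_{\rho(c(-v))}(-v)=\lambda_{\rho(c(v))}(-v)=-\lambda_{\rho(c(v))}(v)=-\mu_\rho(v).$$
For the adjacency condition, if $u,v\in V(\T)$ are adjacent then $c(u)\neq c(v)$ because $c$ is proper, so $\rho(c(u))$ and $\rho(c(v))$ are two distinct indices $i\neq i'$ in $[n]$; the compatibility hypothesis on $\lambda_1,\ldots,\lambda_n$ (in its cross-labeling form) then gives $\mu_\rho(u)+\mu_\rho(v)=\lambda_{i}(u)+\lambda_{i'}(v)\neq 0$.

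Next I would apply Fan's lemma to each $\mu_\rho$. Since $\T$ is a centrally symmetric triangulation of $\S^{n-1}$ and $\mu_\rho$ is a Fan labeling, Fan's lemma (together with the symmetry remark following it) produces both a positively and a negatively alternating $(n-1)$-simplex; let $\sigma_\rho=\langle v_1,\ldots,v_n\rangle$ be a negatively alternating one, with vertices ordered by increasing absolute value of $\mu_\rho$, so that the signs of $\mu_\rho(v_1),\mu_\rho(v_2),\ldots$ alternate starting with $-$. Setting $\pi_\rho(v):=\rho(c(v))$ produces a bijection $V(\sigma_\rho)\to[n]$, and from the very definition of $\mu_\rho$ we have $\lambda_{\pi_\rho(v_k)}(v_k)=\mu_\rho(v_k)$, so the sequence satisfies the desired chain
$$0<-\lambda_{\pi_\rho(v_1)}(v_1)<\lambda_{\pi_\rho(v_2)}(v_2)<\cdots<(-1)^n\lambda_{\pi_\rho(v_n)}(v_n).$$

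Finally, distinct permutations give distinct pairs: if $\rho_1\neq\rho_2$ then, even in the event $\sigma_{\rho_1}=\sigma_{\rho_2}=\sigma$, the restriction of $c$ to $V(\sigma)$ is a bijection onto $[n]$, so $\rho_1\circ c$ and $\rho_2\circ c$ differ on $V(\sigma)$, forcing $\pi_{\rho_1}\neq\pi_{\rho_2}$. Thus the $n!$ permutations yield $n!$ distinct pairs $(\sigma_\rho,\pi_\rho)$ of the required type, proving the theorem. The main delicate point of this plan is verifying that $\mu_\rho$ really is a Fan labeling on $\T$: it is precisely the balancedness of $\T/\Z_2$ that supplies the $\Z_2$-invariant coloring $c$ needed for antisymmetry, and the compatibility of the $\lambda_i$'s that rescues adjacency once $\rho$ has mixed the labelings across vertices; either hypothesis failing on its own would break the construction.
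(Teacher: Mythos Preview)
Your proof is correct and follows essentially the same approach as the paper's: lift the balanced coloring of $\T/\Z_2$ to a $\Z_2$-invariant coloring $c$ of $\T$, for each permutation $\rho$ build the mixed labeling $\mu_\rho(v)=\lambda_{\rho(c(v))}(v)$, apply Fan's lemma to obtain a negative alternating simplex, and set $\pi=\rho\circ c$. You are more explicit than the paper in verifying that $\mu_\rho$ is a Fan labeling (in particular, you spell out why the $\Z_2$-invariance of $c$ is needed for antisymmetry and why compatibility is needed for adjacency), but the argument is the same.
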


A $d$-dimensional simplicial complex is {\em balanced} if there is a coloring of its vertices with $d+1$ colors such that every $d$-dimensional simplex is colorful, i.e., has its vertices of distinct colors. 

\begin{proof}[Proof of \cref{thm:multi-bapat}]
Denote by $\lambda_1,\ldots,\lambda_{n}$ the $n$ compatible Fan labelings. 
Since we are assuming $\T/ \Z_2$ is balanced, we can
fix an arbitrary coloring $c\colon V(\T)\rightarrow[n]$ such that each $(n-1)$-dimensional simplex is colorful. For each permutation $\pi'$ of $[n]$, the labeling $\lambda^{\pi'}$ defined by $\lambda^{\pi'}(v)=\lambda_{\pi'(c(v))}(v)$ is a Fan labeling and there is a negative alternating simplex according to this labeling. For any pair of distinct permutations $\pi'$, each $(d-1)$-dimensional simplex has at least one vertex where the two values of $\pi'(c(v))$ are different. Each choice of $\pi'$ provides thus a different pair $(\sigma,\pi)$ with the desired property (and with $\pi=\pi'\circ c$).
\end{proof}

We do not know whether the statement still holds if we remove the balancedness condition. Fan proved his lemma by induction on the dimension on the sphere, with the help of a formula relating the number of alternating simplices of a pseudomanifold to the number of alternating simplices with positive sign on its boundary~\cite{Fa56}. A similar formula exists for multilabelings: this is precisely the Lee-Shih formula mentioned in the introduction. Unfortunately, because of issues related to the orientation of the sphere, mimicking Fan's proof with Lee-Shi's formula does not seem to lead to any non-trivial result. 

These issues about orientation may explain why there is an oriented version of Sperner's lemma, due to Brown and Cairns~\cite{brown1961strengthening}, while such an oriented version does not seem to exist for Fan's lemma.

\begin{namedtheorem}[Oriented Sperner's lemma]
Let $\T$ be a triangulation of $\Delta^{n-1}$ with a Sperner labeling. Consider the $(n-1)$-dimensional simplices of $\T$ whose vertices get distinct labels. For each such simplex, order its vertices so that the labels are increasing along them. Then the difference between the number of such simplices that are positively oriented by this order and the number of those that are negatively oriented is equal to $1$ in absolute value.
\end{namedtheorem}

(Note that the oriented Sperner's lemma is the special case of Bapat's theorem where all $\lambda_i$ are equal.) We are not aware of any similar counterpart for Fan's lemma.

\section{Open questions and conjectures}\label{sec:open}

Many open questions and conjectures have arisen while writing this paper. First, there is this question about the ``optimality'' of the bounds in~\cref{thm:cake}, mentioned after its statement in Section~\ref{sec:sperner}.

Other questions stem from considering~\cref{thm:multi-halving}. We mentioned at the end of~\cref{subsec:consensus} that the Hobby-Rice theorem is a special case and admits a ``discrete'' version, namely the necklace-splitting theorem. It remains to decide whether such a discrete version also exists for~\cref{thm:multi-halving} in its full generality. Another question related to \cref{thm:multi-halving} is whether there is a way to show that finiteness of the $\M_i$ is necessary, without using measures of total weight going to infinity as we did right after the statement of the theorem.

Finally, it would be nice to settle the question of the existence of an ``oriented Fan lemma'', as mentioned at the end of~\cref{sec:bapat}.

\subsubsection*{Acknowledgments} We thank the referees for their useful comments, which helped improve the paper. In particular, they have been at the origin of several open questions.

\bibliographystyle{siamplain}
\bibliography{multi-sperner}

\end{document}